\documentclass[11pt]{report}
\usepackage{mathrsfs, amsmath,amssymb,amsthm}
\renewcommand{\mod}[1]{{\ifmmode\text{\rm\ (mod~$#1$)}\else\discretionary{}{}{\hbox{ }}\rm(mod~$#1$)\fi}}
\newcommand{\jacobi}[2]{\left(\displaystyle \frac{#1}{#2}\right)}
\newcommand{\cst}{\mathcal{G}_G}
\newcommand{\Cst}{\mathfrak{S}_G}
\newcommand{\pwr}{7/8}
\renewcommand{\nu}{\eta}
\renewcommand{\epsilon}{\varepsilon}

\newtheorem{lemma}{Lemma}

\newtheorem{proposition}[lemma]{Proposition}
\newtheorem{corollary}[lemma]{Corollary}
\newtheorem{theorem*}{Theorem} 



\begin{document}
\title{Almost-Primes Represented by Quadratic Polynomials}
\author{Vishaal Kapoor\\B.\ Sc. (Mathematics) Simon Fraser University
\\
\\
  AN ESSAY SUBMITTED IN PARTIAL FULFILLMENT OF\\
  THE REQUIREMENTS FOR THE DEGREE OF\\
  Master of Science\\
  in\\
  THE FACULTY OF GRADUATE STUDIES\\
  (Department of Mathematics)\\
}
\date{April, 2006}
\maketitle

\begin{center}

\end{center}


\paragraph{Abstract}
In his paper {\it Almost-Primes Represented by Quadratic Polynomials}, Iwaniec
proved that the polynomial $n^2+1$ takes on values with at most two prime
factors (counted with multiplicity) infinitely often.  He states that ``in order
to avoid technical complications, we shall restrict our proof to the polynomial
$n^2+1.$''.  
In this exposition, we follow Iwaniec's proof and show that for any irreducible
quadratic polynomial $G(n)$ (satisfying some obviously necessary hypotheses),
$G(n)$ has at most two prime factors for infinitely many values of $n$.  

\tableofcontents

\paragraph{Acknowledgements}
First and foremost, I would like to thank my supervisor Greg Martin.  Without
his patience, guidance, and clear explanations, none of this would have been
possible.  I would also like to thank Erick Wong for always being available to
talk about mathematics at any hour of the day, and for teaching me about the
true power of the Schwarz.  And finally, I would like to thank my family for
their continued support, even though they still don't know quite what I do.

\nocite{*}
\setlength{\parskip}{0.5cm}
\chapter{Introduction}
\paragraph{} In 1978, Iwaniec proved in his paper {\it Almost-Primes Represented by Quadratic
Polynomials} \cite{almostprimes} that the polynomial $n^2+1$ takes on values with at most 2
prime factors (counted with multiplicity) infinitely often.  Such a result is an attempt to
generalize Dirichlet's theorem on primes in arithmetic progressions to higher
degree polynomials.  Heuristic arguments suggest that any irreducible
polynomial with integral coefficients, positive leading coefficient, and no fixed prime divisor takes on infinitely many prime values.  

In his paper, Iwaniec also states that it is possible, with technical complications, to prove that
\begin{theorem*}
For an irreducible polynomial $g(n) = an^2 + bn+c$ with $a>0$ and
odd $c$, there are infinitely many integers $n$ such that $g(n)$ has
at most 2 prime factors.  Moreover, if $x$ is sufficiently large, then 
\begin{eqnarray*}
|\{n \leq x; g(n) = P_2\}| > \frac{1}{77} \Gamma_g \frac{x}{\log x},
\end{eqnarray*}
where $\displaystyle \Gamma_g = \frac{1}{\deg g}\prod_{p} \left(1 -
\frac{\rho(p)}{p}\right) \left(1 - \frac{1}{p}\right)^{-1}$ and $\rho(p)$ is the
number of incongruent solutions of $g(n) \equiv 0 \mod p.$
\end{theorem*}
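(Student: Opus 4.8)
The plan is to run a \emph{weighted linear sieve} on the sequence $\mathcal A = \{g(n) : n \le x,\ g(n) > 0\}$, of size $X := |\mathcal A| \sim x$. For squarefree $d$ coprime to the finitely many ramified primes, the number of elements of $\mathcal A$ divisible by $d$ equals $g(d) X + r_d$ with $g(d) := \rho(d)/d$ multiplicative; since $\rho(p) = 1 + \jacobi{\Delta}{p}$ for $p \nmid 2a\Delta$, where $\Delta = \mathrm{disc}(g)$, the prime ideal theorem gives $\sum_{w \le p < z} g(p)\log p = \log(z/w) + O(1)$, so the sifting problem has dimension $1$ (the linear sieve). Irreducibility of $g$ with $a > 0$ and $c$ odd rules out a fixed prime divisor: $\rho(p) \le 2 < p$ for odd $p$, and $c$ odd forces $\rho(2) \le 1$. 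Hence $\Gamma_g = \tfrac12 \prod_p (1 - \rho(p)/p)(1 - 1/p)^{-1} > 0$, and by Mertens' theorem $V(z) := \prod_{p<z}(1 - \rho(p)/p) \sim 2 e^{-\gamma}\Gamma_g/\log z$.

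To pin down a $P_2$ I fix a sieve limit $z = x^{\alpha}$ with $\alpha > \tfrac12$ (so that every $g(n)$ free of prime factors below $z$ has at most three of them, since $g(n) \asymp x^2 < z^{4}$) and a weight range $y = x^{\beta}$, and I study
\[
W \;=\; \sum_{\substack{n \le x,\ g(n) > 0 \\ (g(n),\, P(z)) = 1}} \Bigl(1 - \frac1\lambda \sum_{\substack{z \le p < y \\ p \mid g(n)}} \Bigl(1 - \frac{\log p}{\log y}\Bigr)\Bigr), \qquad P(z) := \prod_{p < z} p .
\]
With $\beta$ and $\lambda$ tuned so that the inner bracket is nonpositive whenever $g(n)$ has three or more prime factors (the classical device of the Richert/Iwaniec weighted sieve), any $n$ giving a positive summand has $g(n) = P_2$. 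So it suffices to prove $W > \tfrac1{77}\Gamma_g\,x/\log x$ for $x$ large.

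Discarding the nonnegative terms with no prime factor of $g(n)$ in $[z, y)$ and rewriting the rest through sifting functions,
\[
W \;\ge\; S(\mathcal A, z) \;-\; \frac1\lambda \sum_{z \le p < y}\Bigl(1 - \frac{\log p}{\log y}\Bigr) S(\mathcal A_p, z), \qquad \mathcal A_p := \{a \in \mathcal A : p \mid a\}.
\]
Now I invoke the Jurkat--Richert linear sieve — in the sharp form with a \emph{bilinear} remainder term (Iwaniec's linear sieve with well-factorable weights) — to bound $S(\mathcal A, z)$ below by $X V(z) f(s) - R_0$ and each $S(\mathcal A_p, z)$ above by $g(p) X V(z) F(s_p) + R_p$, where $D = x^{\theta}$ is the level of distribution, $s = \log D/\log z$, $s_p = \log(D/p)/\log z$, and $f, F$ are the classical linear-sieve functions. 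Replacing the sum over $p$ by an integral, the main term of $W$ becomes $\Gamma_g\,(x/\log x)$ times a bracket $B = B(\alpha,\beta,\lambda,\theta)$ assembled from $f$, $F$ and the integral of $F$ against the weight $1 - (\log p)/\log y$; a delicate but entirely elementary optimization of $B$ over the admissible region makes $B > 1/77$. This is Iwaniec's numerical computation, rerun with the general conductor $\Delta$ in place of $-1$, and it is where the constant $77$ appears.

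The heart of the matter — and the step I expect to be the main obstacle — is controlling the total error $R = R_0 + \lambda^{-1}\sum_{z \le p < y} R_p$, i.e.\ establishing a level of distribution $D = x^{\theta}$ with $\theta$ \emph{substantially larger than $1$}: the trivial estimate $r_d \ll \rho(d)$ gives only $\theta < 1$, for which $B < 0$. For this I would, for each modulus $d = qr$ in the bilinear range allowed by the well-factorable weights, expand
\[
|\mathcal A_d| \;=\; \sum_{\substack{\nu \bmod d\\ d \mid g(\nu)}} \#\{n \le x : n \equiv \nu \pmod d\}
\]
by Poisson summation in $n$; the frequency $0$ reproduces $g(d) X$, and $r_d$ reduces to $\sum_{0 < |h| \le H}(\text{smooth weight})\sum_{d\mid g(\nu)} e(h\nu/d)$. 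Completing the square turns the inner root sum into a Sali\'e-type sum of size $O(d^{1/2+\epsilon})$; crucially, by parametrizing the roots $\nu$ of $g(\nu)\equiv 0 \pmod d$ through representations of $d$ by the quadratic form attached to $g$ and exploiting the splitting $d = qr$, one arrives at a genuine sum of \emph{Kloosterman sums}, to which Weil's bound together with one further completion (averaging over the Kloosterman moduli) applies, yielding $R \ll_A x(\log x)^{-A}$ for $D$ as large as the optimization demands. Passing from $n^2+1$ to $an^2+bn+c$ only changes the conductor of these exponential sums from $-1$ to $\Delta$ and adjoins finitely many ramified primes, so every estimate survives. Assembling the positive main term with this error bound gives $W \gg_g x/\log x$ with implied constant exceeding $1/77$, and unwinding the weighted sieve yields the asserted lower bound for $|\{n \le x : g(n) = P_2\}|$.
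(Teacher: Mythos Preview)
Your outline has the right skeleton (linear sieve, bilinear remainder, equidistribution of roots via the quadratic-form parametrization), but two of the load-bearing choices diverge from the paper in ways that break the argument.

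\textbf{The weight and the sieve limit.} You take $z=x^{\alpha}$ with $\alpha>1/2$ and the plain Richert weight $1-\lambda^{-1}\sum_{z\le p<y,\;p\mid g(n)}(1-\log p/\log y)$. The paper does neither. It sets $z=x^{1/5}$, and it uses a \emph{refined} weight $w_p(n)$ that depends on whether $p$ is the least prime factor of $g(n)$ and on whether $p<x^{1/2}$; after a Buchstab identity this produces, in the decomposition of $W(\mathscr A,z)$, an extra \emph{positive} double sum $\sum_{z\le p_1<p<x^{1/2}}\frac{\log(p/p_1)}{\log x}\,S(\mathscr A_{pp_1},p_1)$ alongside the subtracted single sums. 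With the achieved level $y=x^{16/15}$ and $z=x^{1/5}$ the resulting numerical bracket is only $0.014\ldots$, so the margin is thin; with your $\alpha>1/2$ the parameter $s=\log D/\log z<32/15$ sits just above the vanishing point of $f$, and without the refined weight's positive double sum there is no reason to expect the bracket to be positive. In particular you cannot simply ``rerun Iwaniec's numerical computation'': that computation is for the paper's weight at $\gamma=1/5$, not for yours.

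\textbf{The error term.} To push the level beyond $x$ the paper does not apply Poisson summation modulus by modulus and invoke Weil. It uses Linnik's \emph{dispersion method}: Cauchy--Schwarz in the long variable $m$, so that one must control $\sum_{m\sim M}B(x;m,N)^2=W-2xV+x^2U$, and then each of $U,V,W$ is evaluated to matching main terms via a precise equidistribution theorem for the fractions $\Omega/(mq)$ with $G(\Omega)\equiv 0\pmod{mq}$ (Proposition~\ref{lemma:4}). The quadratic-form parametrization and Hooley's Kloosterman estimate that you mention enter \emph{inside} that equidistribution proof, not as a direct bound on $r_d$. Your ``one further completion (averaging over the Kloosterman moduli)'' is not a recognised substitute for dispersion here; Poisson plus Weil on individual $d$ gives at best level $x^{1-\epsilon}$, which is exactly what you acknowledge is insufficient.
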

We say a number is almost-prime of order $r$ (denoted by $P_r$) if it has at most $r$ prime factors counted with
multiplicity.  In this exposition, we follow the techniques employed by Iwaniec to prove that
such an irreducible polynomial $g(n)=an^2+bn+c$ takes on $P_2$ values infinitely often.
Let $s = 4a(b^2-4ac).$  We note that as $g(n)$ has no fixed prime
divisors, there must be a residue class $t \mod s$ such that 
$g(t)$ is not congruent to $0 \mod p$ for any prime $p$ dividing $s.$  Let $G(n)$ be the polynomial
$g(s n + t),$ and let $\delta$ and $\Delta$ denote the discriminants of $g(n)$ and $G(n)$  respectively.  
To summarize, for the remainder of this exposition we have
\begin{align*}
\delta &= b^2 - 4ac,\\
s=\Delta &= 4a \delta,\ \textrm{and}\\
G(n) &= g(s n +t).
\end{align*}

\setlength{\parskip}{0.5cm}
\chapter{Richert's Weighted Sum}
\paragraph{}Define $\mathscr{A}$ to be the sequence of values of $f(n)$ for
$n\leq x,$ and let $\mathscr{A}_p$ denote the set of values in $\mathscr{A}$
that are divisible by $p.$  Ideally, we are interested in finding a lower bound for the number
of prime values in our sequence $\mathscr{A}$; but as this is difficult, we must settle for
$P_2$ values.  

\paragraph{}To detect these $P_2$ values, we make use of the weighted sieve.
Denote by $p_n$ and $\omega(n),$ the least prime factor of $n$ and the number of
prime divsisors of $n$ respectively.
Let
\begin{eqnarray*}
w_p(n) = \left\{ \begin{array}{ll}
\displaystyle 1-\frac{\log p}{\log x} & \textrm{if $p=p_n$},\\
\displaystyle
\frac{\log p_n}{\log x} & \textrm{if $p>p_n$ and $p<x^{1/2}$},\\
\displaystyle
1-\frac{\log p}{\log x} & \textrm{if $p>p_n$ and $p \geq x^{1/2}$}.
\end{array} \right. 
\end{eqnarray*}
Let $2 \leq \lambda < 3$ be a parameter, and define 
\begin{equation}
w(n) = 1 - \frac{1}{3-\lambda} \sum_{p \mid n, p<x} w_p(n).
\label{eqn_w}
\end{equation}
Then
\begin{lemma}
If $n \leq x^\lambda$ and $w(n) > 0$ then $n$ has at most 2 distinct
prime factors. \label{lemma_1}
\end{lemma}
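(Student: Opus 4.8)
The plan is to prove the contrapositive: assuming $n \le x^\lambda$ and that $n$ has at least three distinct prime factors, I would show $w(n) \le 0$, i.e.
\[
\sum_{p \mid n,\ p < x} w_p(n) \ \ge\ 3 - \lambda .
\]
Write the distinct prime factors of $n$ as $q_1 < q_2 < \dots < q_r$ with $r \ge 3$, so $q_1 = p_n$, and put $\alpha_i = \log q_i / \log x$. Two elementary observations drive everything. First, since $\prod_i q_i \mid n$ we have $\sum_i \alpha_i \le \log n/\log x \le \lambda$; in particular, as $\lambda < 3$, at most two of the $q_i$ can be $\ge x$, so $q_1 < x$ and the term $w_{q_1}(n) = 1 - \alpha_1$ genuinely appears in the sum. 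Second, every $w_p(n)$ with $p < x$ is non-negative, so it suffices to bound the contribution of $q_1, q_2, q_3$ (possibly invoking the size constraint) and discard the rest. The argument then splits on the size of $p_n = q_1$.

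If $q_1 \ge x^{1/2}$, then every prime factor of $n$ exceeds $x^{1/2}$, so each of the $j \ge 1$ prime factors below $x$ contributes exactly $1 - \alpha_i$, while each of the remaining $r - j$ prime factors contributes at least $1$ to $\sum_i \alpha_i$. Hence the sum equals $j - \sum_{q_i < x}\alpha_i \ge j - \big(\lambda - (r-j)\big) = r - \lambda \ge 3 - \lambda$, as wanted.

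If $q_1 < x^{1/2}$, then $w_{q_1}(n) = 1 - \alpha_1 > \frac12$, and each later prime contributes $\alpha_1$ if it is $< x^{1/2}$ and $1 - \alpha_i$ if it lies in $[x^{1/2}, x)$. I would then run through the finitely many positions of $q_2, q_3$ relative to $x^{1/2}$ and $x$. When $q_3 < x$ the three available terms sum to $3 - (\alpha_1 + \alpha_2 + \alpha_3)$, $2 - \alpha_3$, or $1 + \alpha_1$, each of which is $\ge 3 - \lambda$ since $\lambda \ge 2$, $\alpha_3 < 1$, and $\sum_i \alpha_i \le \lambda$. When $q_3 \ge x$ there are at most two primes below $x$: if $q_2 < x$, then $q_1 q_2 q_3 \le n \le x^\lambda$ forces $\alpha_1 + \alpha_2 \le \lambda - 1$, and the two surviving terms sum to $1$ or to $2 - (\alpha_1 + \alpha_2) \ge 3 - \lambda$; if $q_2 \ge x$ also, then $q_2 q_3 > x^2$, so $q_1 < x^{\lambda - 2}$ and the lone surviving term is $1 - \alpha_1 > 3 - \lambda$. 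In every case the displayed inequality holds, so $w(n) \le 0$.

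I expect the only real work to be the bookkeeping in the last case, where $p_n$ is small: there the terms $w_{q_i}(n) = \alpha_1$ contributed by the ``middle'' primes are negligible, so the bound $3 - \lambda$ must be recovered either from the large term $1 - \alpha_1$ or, when prime factors exceeding $x$ are present, from the size constraint $n \le x^\lambda$ — and keeping track of which of these is binding in each sub-configuration is the main obstacle. The remaining points (verifying that $q_1 < x$ always holds under the hypotheses, and that having four or more distinct prime factors only helps) are routine edge-case checks.
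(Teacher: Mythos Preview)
Your proof is correct, but it takes a different route from the paper. You split on whether $p_n\ge x^{1/2}$ and then track the positions of $q_2,q_3$ relative to $x^{1/2}$ and $x$, which leads to the bookkeeping you anticipate. The paper instead splits on whether $n$ has at least two prime factors below $x^{1/2}$. If so, the $p_n$-term and one other ``middle'' term already give $(1-\alpha_1)+\alpha_1=1\ge 3-\lambda$, finishing that case in one line (without even using $\omega(n)\ge 3$). If not, then \emph{every} weight $w_p(n)$ with $p<x$ equals $1-\log p/\log x$, and the single inequality
\[
\sum_{p\mid n,\ p<x}\Big(1-\frac{\log p}{\log x}\Big)\ \ge\ \sum_{p\mid n}\Big(1-\frac{\log p}{\log x}\Big)\ \ge\ \omega(n)-\frac{\log n}{\log x}\ \ge\ \omega(n)-\lambda
\]
(extending the sum to primes $p\ge x$ only subtracts non-positive terms) yields $w(n)\le(3-\omega(n))/(3-\lambda)$ immediately. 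Your ``$q_1\ge x^{1/2}$'' sub-case is essentially this second computation, but your choice of split forces the extra case analysis when $q_1<x^{1/2}$; the paper's split avoids that by isolating exactly when two cancelling small-prime terms are available.
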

\begin{proof}
We note that the sum in \eqref{eqn_w} consists of only positive terms that are $\leq 1$.  Hence if $n$
has two or more prime factors less than $x^{1/2}$, 
\begin{align*}
w(n) 
& \leq  
1 - \frac{1}{3-\lambda}\left\{1 -\frac{\log p_n}{\log x} + \frac{\log
p_n}{\log x} \right\}
 = \frac{2 - \lambda}{3 - \lambda}  \leq 0
\end{align*}
by our hypothesis on $\lambda.$  
Thus, we may assume that $n$ has at most one prime factor less than
$x^{1/2}$ and so we have
\begin{eqnarray*}
w(n) = 1 - \frac{1}{3-\lambda}
\sum_{\substack{p \mid n \\ p<x} }\frac{\log(x/p)}{\log x} 
\leq 1 - \frac{1}{3-\lambda}\left\{\omega(n) - \frac{\log n}{\log x}\right\} 
\leq \frac{3 - \omega(n)}{3 - \lambda}
\end{eqnarray*}
which is $\leq 0$ when $\omega(n)> 2.$  
\end{proof}

Let $\mathscr{A}$ be a sequence of positive integers $a \leq x^{\lambda}$ and
let $z \leq x^{1/2}.$  We note that Lemma \ref{lemma_1} gives us useful information
about only squarefree $P_2$s. 
\newcommand{\sumprime}{\mathop{\sum\nolimits'}}
\begin{lemma} \label{lemma_useful}
We have 
\begin{align*}
|\{a \in \mathscr{A} : \textrm{$a$ is a $P_2$}\}|  \geq W(\mathscr{A},z) +
O(xz^{-1/2}).
\end{align*}
\end{lemma}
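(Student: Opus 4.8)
The plan is to bound the weighted sum $W(\mathscr{A},z)=\sum_{a\in\mathscr{A},\,(a,P(z))=1}w(a)$ (here $P(z)=\prod_{p<z}p$ and $w$ is as in \eqref{eqn_w}) from above by a count of integers occurring in $\mathscr{A}$, and then to show that all but $O(xz^{-1/2})$ of the integers so counted really are $P_2$'s. Since each $w_p(n)$ lies in $[0,1]$, the correction sum in \eqref{eqn_w} is nonnegative, hence $w(a)\le 1$ for every $a$; discarding the terms with $w(a)\le 0$ therefore gives
\[
W(\mathscr{A},z)\;\le\;\#\{a\in\mathscr{A}:(a,P(z))=1,\ w(a)>0\}.
\]
I would split this count according to whether $a$ is squarefree. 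If $a\in\mathscr{A}$ is squarefree, $a\le x^{\lambda}$, and $w(a)>0$, then Lemma~\ref{lemma_1} says $a$ has at most two distinct prime factors, and being squarefree it then has at most two prime factors counted with multiplicity, i.e.\ $a$ is a $P_2$; thus the squarefree part of the count is at most $\#\{a\in\mathscr{A}:a\text{ is a }P_2\}$. For the non-squarefree part I would simply drop the hypothesis $w(a)>0$; it then suffices to prove
\[
\#\{a\in\mathscr{A}:(a,P(z))=1,\ a\text{ not squarefree}\}\;=\;O(xz^{-1/2}).
\]

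Here I would use that $\mathscr{A}=\{G(n):n\le x\}$, so $|G(n)|\ll x^{2}$. Fix $n$ with $G(n)$ not squarefree and $(G(n),P(z))=1$; then every prime factor of $G(n)$ is $\ge z$, and some prime $p\ge z$ satisfies $p^{2}\mid G(n)$. If $G(n)$ is a perfect square, say $G(n)=m^{2}$, then multiplying by $4a$ gives $\bigl(2a(sn+t)+b\bigr)^{2}-4a\,m^{2}=\delta$, a Pell-type equation in the variables $2a(sn+t)+b$ and $m$; since $g$ is irreducible we have $\delta\ne 0$, so this equation has only $O(\log x)$ solutions with $|2a(sn+t)+b|\ll x$, whence there are $O(\log x)$ such $n$. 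If instead $G(n)$ is not a perfect square, then $G(n)/p^{2}>1$, so $G(n)/p^{2}$ has a prime factor $\ge z$, i.e.\ $G(n)/p^{2}\ge z$; together with $|G(n)|\ll x^{2}$ this forces $p\ll xz^{-1/2}$. The $n$ arising in this case are therefore counted by
\[
\sum_{z\le p\ll xz^{-1/2}}\#\{n\le x:p^{2}\mid G(n)\}\;\ll\;\sum_{z\le p\ll xz^{-1/2}}\bigl(x/p^{2}+1\bigr),
\]
using that $G(n)\equiv 0\pmod{p^{2}}$ has at most two incongruent solutions modulo $p^{2}$ (Hensel's lemma, for $p\nmid\Delta$) and none when $p\mid\Delta$ (by the choice of the residue $t$, since $\Delta=s$). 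The first term contributes $\ll x\sum_{p\ge z}p^{-2}\ll x/z$, the second $\ll\pi(xz^{-1/2})\ll xz^{-1/2}/\log x$, and adding the $O(\log x)$ perfect squares leaves $O(xz^{-1/2})$ since $z\le x^{1/2}$. Combining the pieces yields $W(\mathscr{A},z)\le\#\{a\in\mathscr{A}:a\text{ is a }P_2\}+O(xz^{-1/2})$, which is the claim.

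The step I expect to be the real obstacle is controlling the non-squarefree contribution in the range $p>x^{1/2}$. There the congruence $p^{2}\mid G(n)$ with $n\le x$ carries only the trivial bound $O(1)$ (no $x/p^{2}$ saving is available, since $n$ ranges over an interval shorter than $p^{2}$), and summing $O(1)$ over all primes $p\ll x$ would give an error of order $x/\log x$, which matches the main term $W(\mathscr{A},z)$ and is useless. What rescues the estimate is the structural observation that, once one removes the sparse family of perfect-square values of $G$ (only $O(\log x)$ of them, by the Pell input), every non-squarefree value coprime to $P(z)$ obeys $G(n)/p^{2}\ge z$, which caps $p$ at $\ll xz^{-1/2}$ and makes the prime-counting bound $\pi(xz^{-1/2})$ acceptable. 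The one ingredient here lying outside routine sieve bookkeeping is the fact $\#\{n\le x:G(n)\text{ is a perfect square}\}\ll\log x$; I would state and justify this carefully, as the whole estimate depends on it.
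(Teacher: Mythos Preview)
Your argument is correct and follows the same strategy as the paper: isolate the non-squarefree $a$ with $(a,P(z))=1$, observe that the repeated prime $p$ satisfies $p\ll xz^{-1/2}$, and sum $O(x/p^{2}+1)$ over $z\le p\ll xz^{-1/2}$. You are in fact more careful than the paper at the one delicate point: the paper writes $G(n)=p^{2}l$ and asserts $l\ge z$ without addressing $l=1$, whereas you separate out the perfect-square values and dispose of them by the Pell-type count $\#\{n\le x:G(n)=\square\}\ll\log x$.

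That Pell input is correct but not strictly necessary. If instead of splitting ``squarefree versus non-squarefree'' you split ``$P_2$ versus not $P_2$'', the perfect-square case vanishes: any $a$ with $w(a)>0$ has at most two \emph{distinct} prime factors by Lemma~\ref{lemma_1}, so a non-$P_2$ among these has at least three prime factors with multiplicity, forcing $p^{2}\mid a$ with $a/p^{2}>1$ automatically; then $a/p^{2}\ge z$ and the rest of your estimate goes through unchanged. Either route is fine.
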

\begin{proof}
For the non-squarefree numbers, we consider the set $|\{ G(n) :n \leq x,\
(G(n),P(z))=1, \textrm{~and~}  G(n) \equiv 0 \mod{p^2}\  \textrm{for some
prime}\ p\}| \leq |\{a \in \mathscr{A}: (a,P(z)) = 1),\ \textrm{$a$ is
non-squarefree}\} |.$ If $G(n)=an^2+bn+c = p^2 l$ for some $l$, since
$(G(n),P(z))=1,$ we must have $l\geq z$ (and $p\geq z$).  As $G(n) \leq Dn^2$
for some $D,$ we have $p\leq D xz^{-1/2}.$ Thus
\begin{displaymath}
\sum_{\substack{
	a \in \mathscr{A},\ (a,P(z)) = 1)\\ 
	\textrm{$a$ is non-squarefree}}} 1 
\leq
\sum_{z\leq p \leq D xz^{-1/2}}\ \sum_{n \leq x,\ G(n) \equiv 0 \mod{p^2}} 1,
\end{displaymath}
which is 
\begin{displaymath}
\leq \sum_{z\leq p \leq D xz^{-1/2}} 2 (xp^{-2}+1) \ll x z^{-1/2}.
\end{displaymath}
And for the squarefree numbers, Lemma \ref{lemma_1} gives
\begin{equation*}
|\{a \in \mathscr{A} : \textrm{$a$ is a $P_2$}\}| 
\geq 
\sumprime_{\substack{a \in \mathscr{A}\\ (a,P(z)) = 1}}  w(a),
\end{equation*}
where the summation is over squarefree numbers in $\mathscr{A}.$  Thus
\begin{align*}
W(\mathscr{A},z) &= \sum_{\substack{a\in \mathscr{A}\\ (a,P(z))=1}} w(a)  
 = 
\sumprime_{\substack{a \in \mathscr{A}\\ (a,P(z)) = 1}}  w(a) + O(xz^{-1/2});
\end{align*}
and so the conclusion follows.
\end{proof}
We will be concerned with the sequence $\mathscr{A} =
\{G(n):\ n\leq x\},$ so we fix $\lambda = 2+ \frac{D}{\log x}$ where $D$ some
constant such that $G(n) \leq Dn^2.$  As the main term we will obtain will be of a larger order of magnitude than $xz^{-1/2},$ it is sufficient to find a lower bound for
\begin{displaymath}
W(\mathscr{A},z) = \sum_{\substack{a\in \mathscr{A}\\ (a,P(z))=1}} w(a).
\end{displaymath}
\paragraph{}Write 
\begin{eqnarray}
W(\mathscr{A},z) & = & \sum_{\substack{a\in \mathscr{A}\\ (a,P(z))=1}}
\left(1- \frac{1}{3 - \lambda}\sum_{p \mid a, ~ p<x} w_p(a)\right) \\
		 & = & S(\mathscr{A},z) 
-  \frac{1}{3 - \lambda}\sum_{\substack{a\in \mathscr{A}\\(a,P(z))=1}} \sum_{p
\mid a, ~ p<x}
w_p(a). 
\end{eqnarray}
Interchanging order of summation gives,
\begin{displaymath}
W(\mathscr{A},z) = S(\mathscr{A},z) - 
\frac{1}{3 - \lambda} \sum_{ z \leq p < x} \sum_{\substack{a \in \mathscr{A}_p\\ (a,P(z)) = 1}} w_p(a).
\end{displaymath}
Considering the definition of $w_p(a),$ we divide the double sum into three cases
\begin{eqnarray*}
\left\{\begin{array}{l}
z \leq p < x^{1/2},\ \textrm{$p$ is the smallest prime factor of $a$}; \\
z \leq p < x^{1/2},\ \textrm{$p$ is not the smallest prime factor of $a;$ and},\\
x^{1/2}\leq p < x.
\end{array}\right\}
\end{eqnarray*}
This gives
\begin{align}
W(\mathscr{A},z) 
& =  S(\mathscr{A},z) - 
\frac{1}{3-\lambda}\bigg\{\sum_{z\leq p < x^{1/2}}\left(1-\frac{\log p}{\log x}\right) 
	S(\mathscr{A}_p,p) \nonumber \\&  - 
\sum_{z\leq p_1 < p < x^{1/2}}\frac{\log p_1}{\log x}
	S(\mathscr{A}_{p p_1}, p_1) 
-
\sum_{x^{1/2} \leq p < x}\left(1-\frac{\log p}{\log x}\right) 
		S(\mathscr{A}_p,z)\bigg\}.\label{eqn_W5}
\end{align}
Making use of the Buchstab formula
\begin{align*}
\sum_{z \leq p_1 < p} S(\mathscr{A}_{p p_1}, p_1) = S(\mathscr{A}_{p},z) -
S(\mathscr{A}_{p},p),\end{align*}
we add 
$\textstyle \frac{\log p}{\log x} \sum_{z \leq p_1 < p} S(\mathscr{A}_{p p_1},
p_1)$ to the middle sum of \eqref{eqn_W5} and subtract 
$\frac{\log p}{\log x}\left(S(\mathscr{A}_{p},z) - S(\mathscr{A}_p,p)\right)$
from the remaining sums to obtain
\begin{align}
W(\mathscr{A},z) 
&=
S(\mathscr{A},z) +
\frac{1}{3 - \lambda}\bigg\{\sum_{z \leq p < x^{1/2}} 
\sum_{z \leq p_1 < p} \frac{\log (p/p_1)}{\log x} S(\mathscr{A}_{p p_1},p_1)
\nonumber \\
&  -
\sum_{z \leq p < x^{1/2}}\bigg(\left(1-\frac{2\log p}{\log x}\right) 
	S(\mathscr{A}_p,p) + \frac{\log p}{\log x} S(\mathscr{A}_p,z)\bigg) \nonumber \\
&  -
\sum_{x^{1/2}\leq p < x}\left(1 - \frac{\log p}{\log x} \right)
		S(\mathscr{A}_p,z)\bigg\}.\label{eqn_W2}
\end{align}

\setlength{\parskip}{0.5cm}
\chapter{Linear Sieve with Error Term}
\label{ErrorTerm}
Let $\mathscr{B}$ be a finite sequence of $X$ integers.  We suppose the 
existence of a multiplicative function $\rho(d)$ that is used to approximate the
number of elements in $\mathscr{B}$ congruent to $0 \mod d.$   We also suppose
that $0\leq \rho(p) < p,$ for any prime $p.$  
Stated more precisely, 
\begin{align*}
&|\mathscr{B}_d| = |\{b\in\mathscr{B}; b\equiv 0 \mod d\}| \approx X
\frac{\rho(d)}{d},~~\textrm{and} \\
&0 \leq \rho(p) < p\ \textrm{ for any prime $p$}.
\end{align*}
We denote the error in our approximation by
\begin{eqnarray*}
r(\mathscr{B}; d) = |\mathscr{B}_d| - \frac{\rho(d)}{d}X.
\end{eqnarray*}
We also insist that there is a constant $K\geq 1$
such that for any $2\leq w < z,$ 
\begin{equation*}
\prod_{w\leq p < z} \left(1-\frac{\rho(p)}{p}\right)^{-1} \leq \frac{\log
z}{\log w} \left(1+\frac{K}{\log w}\right).
\end{equation*}
We also define 
$V(z) = \prod_{p<z}\left(1-\frac{\rho(p)}{p}\right).$

\paragraph{}Lastly, let $F(s)$ and $f(s)$ be the continuous solutions of the system
of differential-difference equations
\begin{eqnarray*}
\begin{array}{lll}
sf(s) & =0 & \textrm{if~} 0<s\leq 2,\\
sF(s) & =2e^C & \textrm{if~} 0<s\leq 3,\\
(sf(s))' & = F(s-1) & \textrm{if~} s>2,\\
(sF(s))' & = f(s-1) & \textrm{if~} s>3,
\end{array}
\end{eqnarray*}
where $C=0.577...$ is the Euler constant.  
\paragraph{} From \cite{newform}, we have
\begin{lemma} \label{lemma_2}
Let $z\geq 2,M\geq 2,$ and $N\geq 2.$  For any $\nu >0$ we have 
\begin{eqnarray*}
S(\mathscr{B},z) \leq V(z) X \{F(s) + E\} + 2^{\nu^{-7}}R(\mathscr{B};M,N),\\
S(\mathscr{B},z) \geq V(z) X \{f(s) - E\} - 2^{\nu^{-7}}R(\mathscr{B};M,N),
\end{eqnarray*}
where $s=\log MN/\log z,$ and $E\ll_{\epsilon} \nu s^2 e^{K} + \nu^{-8} e^{K-s} (\log MN)^{-1/3}.$
The error term $R(\mathscr{B}; M,N)$
is of the form
\begin{eqnarray*}
R(\mathscr{B}; M,N) = \sum_{\substack{m<M,n<N\\mn|P(z) }} a_m b_n
r(\mathscr{B};mn),
\end{eqnarray*}
where the coefficients $a_m,b_n$ are bounded by 1 in absolute value and depend at most on
$M,N,z,$ and $\nu.$
\end{lemma}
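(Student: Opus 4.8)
The plan is to reconstruct the well-factorable linear sieve of \cite{newform}, so the proof amounts to assembling four standard ingredients. First I would introduce the Rosser--Iwaniec upper and lower sieve weights $\lambda_d^{\pm}$: these are supported on squarefree $d \mid P(z)$ with $d < D := MN$, satisfy $|\lambda_d^{\pm}| \le 1$, and obey the pointwise sieve inequality
\[
\sum_{d \mid (b,\,P(z))} \lambda_d^{-} \;\le\; \mathbf{1}_{\{(b,P(z))=1\}} \;\le\; \sum_{d \mid (b,\,P(z))} \lambda_d^{+}
\]
for every integer $b$. Summing over $b \in \mathscr{B}$ and substituting $|\mathscr{B}_d| = \frac{\rho(d)}{d} X + r(\mathscr{B};d)$ decomposes $S(\mathscr{B},z)$ into a main term $X \sum_{d<D} \lambda_d^{\pm} \frac{\rho(d)}{d}$ and a remainder $\sum_{d<D} \lambda_d^{\pm} r(\mathscr{B};d)$, with the $+$ weights yielding the upper bound and the $-$ weights the lower bound.

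For the main term I would appeal to the classical analysis of the Rosser weights. Writing $s = \log D/\log z$, one uses the hypothesis $\prod_{w \le p < z}(1 - \rho(p)/p)^{-1} \le \frac{\log z}{\log w}(1 + K/\log w)$ together with iterated Buchstab identities to show $X \sum_{d<D} \lambda_d^{+} \frac{\rho(d)}{d} = X V(z)\{F(s) + E\}$ and $X \sum_{d<D} \lambda_d^{-} \frac{\rho(d)}{d} = X V(z)\{f(s) - E\}$, where $F$ and $f$ are exactly the solutions of the displayed differential--difference system and $E$ has the stated shape: the $\nu s^2 e^{K}$ contribution arises from the boundary terms produced when the interval $[2,z)$ is dissected into $O(\nu^{-1})$ subranges on which $\rho(p)/p$ is treated as locally constant, and the $\nu^{-8} e^{K-s} (\log MN)^{-1/3}$ contribution comes from the decay estimates $F(s)-1,\ 1-f(s) = O(e^{-s})$ for the sieve functions. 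The parameter $\nu$ thus trades main-term accuracy against the size of the remainder: a finer dissection shrinks $E$ but multiplies the number of pieces into which the error sum must be split.

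The substance of \cite{newform} --- and the step I expect to be the main obstacle --- is the conversion of the remainder $\sum_{d<D} \lambda_d^{\pm} r(\mathscr{B};d)$ into the bilinear form $\sum_{m<M,\, n<N,\, mn \mid P(z)} a_m b_n\, r(\mathscr{B};mn)$ with $|a_m|, |b_n| \le 1$. Here one exploits the \emph{well-factorability} of the Rosser weights: the support of each $\lambda_d^{\pm}$ is cut out by inequalities on the prime factors of $d$, so by grouping those constraints one can split every admissible $d$ as $d = mn$ with $m < M$ and $n < N$, then regroup to extract bounded coefficients $a_m, b_n$. The difficulty is combinatorial: carrying out the dissection into $O(\nu^{-1})$ scales, each step introducing a bounded amount of branching, produces a number of distinct bilinear ``boxes'' that is at most $2^{\nu^{-7}}$, and one must check that after all the regrouping the coefficients attached to $r(\mathscr{B};mn)$ stay $\le 1$ in absolute value --- which is precisely what forces the factor $2^{\nu^{-7}}$ in front of $R(\mathscr{B};M,N)$ rather than letting it be absorbed into the $a_m, b_n$.

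Collecting the main term and the bilinear remainder for the $+$ and $-$ weights separately then yields the two inequalities of the lemma. For the explicit construction of the Rosser--Iwaniec weights, the verification of their well-factorability, the bookkeeping of the constants in $E$, and the combinatorial count behind $2^{\nu^{-7}}$, I would refer the reader to \cite{newform}; the present exposition needs only the statement, which will be applied as a black box to the sieve sums $S(\mathscr{A},z)$, $S(\mathscr{A}_p,z)$, and their relatives in the chapters that follow.
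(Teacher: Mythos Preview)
The paper does not prove this lemma at all: it is stated immediately after the phrase ``From \cite{newform}, we have'' and is used thereafter as a black box. Your proposal ultimately lands in the same place (you close by deferring the details to \cite{newform}), so in that sense the approaches agree; the outline you give of the Rosser--Iwaniec construction, the main-term analysis, and the well-factorability decomposition is a reasonable summary of what is in \cite{newform}, but it is additional exposition that the paper itself does not attempt.
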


\setlength{\parskip}{0.5cm}
\chapter{Error Term}
\label{SectionErrorTerm}

In order to study the error term of the previous section, we look at a general
sum 
\begin{displaymath}
B(x; m,N) = \sum_{n<N, (n,m) = 1} b_n r(\mathscr{A}; mn).
\end{displaymath}
Here $\{b_n\}$ is a sequence of real numbers with $|b_n| \leq 1$ and $b_n=0$
when $n$ is not squarefree.  Note that we are considering the sequences $\{b_n\}$ supported
on squarefree $n$ because in the remainder term for our sieve, the sum is over
$m,n$ for $m,n$ dividing the squarefree number $P(z).$  Thus $a_m b_n$
will also be supported on squarefree numbers $n$ (and unimportantly $m$ as
well). 

For the remainder of this exposition, unless otherwise specified, the constant pertraining to Vinogradov's symbol $\ll$ is $\epsilon.$  

We will be interested in proving the following:
\begin{proposition} \label{proposition_1}
For $M<x$ and $\epsilon > 0,$ 
\begin{equation}
\sum_{M<m<2M} B^2(x; m,N) \ll (1+N^{15/4}M^{-9/4}x)x^{1+\epsilon}.
\label{dispersionequ}
\end{equation}
\end{proposition}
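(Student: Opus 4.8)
The plan is to bound $\sum_{M<m<2M}B^2(x;m,N)$ by opening the square, which produces a double sum over $n_1,n_2<N$ with coefficients $b_{n_1}b_{n_2}$ and a quantity $r(\mathscr A;mn_1)r(\mathscr A;mn_2)$ summed over $m\sim M$. After expanding each $r(\mathscr A;mn_i)=|\mathscr A_{mn_i}|-\rho(mn_i)(mn_i)^{-1}x$, I would recognize this as a dispersion-method set-up (à la Linnik--Iwaniec): the square of the error is controlled by comparing the ``actual'' count of $n\le x$ with $G(n)\equiv 0\pmod{m[n_1,n_2]}$ against its expected density. Since $G$ is quadratic, the congruence $G(n)\equiv 0\pmod q$ has $\rho(q)$ roots, so $|\mathscr A_q|=\sum_{\nu:\,G(\nu)\equiv0(q)}\bigl(x/q+O(1)\bigr)=\rho(q)x/q+O(\rho(q))$; the naive error per modulus is $O(\rho(q))$, which is far too weak after squaring, so the whole point is that the errors \emph{oscillate} and cancel when summed over $m$.

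The key steps, in order. First, reduce to estimating, for each pair $(n_1,n_2)$, the sum $\sum_{M<m<2M}r(\mathscr A;mn_1)r(\mathscr A;mn_2)$, and split off the diagonal-type contribution where $m,n_1,n_2$ share factors; handle common factors of $n_1,n_2$ by writing $n_i=d n_i'$ with $(n_1',n_2')=1$, so the relevant modulus is $m d n_1' n_2'$ of size up to $MN^2$. Second, detect the congruence condition $G(n)\equiv 0\pmod{q}$ by its roots and, crucially, use the Chinese Remainder Theorem together with the structure of $G(n)=g(sn+t)$ --- whose discriminant $\Delta=4a\delta$ and leading behaviour are controlled --- to rewrite, via completing the square, the equation $4aG(n)\equiv\square$; this converts counting roots of $G$ modulo $q$ into counting $n$ in an interval lying in prescribed residue classes, where the number and spacing of classes is governed by $\rho$. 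Third, apply Poisson summation (or the truncated Pólya--Vinogradov / completion-of-sums technique) to $\sum_{n\le x}\mathbf 1_{G(n)\equiv0(q)}$ for $q\le MN^2$: the zero frequency reproduces the main term $\rho(q)x/q$, and the nonzero frequencies give, after summing over $m$, Kloosterman-type or Gauss-sum-type exponential sums whose moduli we can estimate by Weil's bound (the relevant sums are essentially quadratic Gauss sums twisted by the $n_i$-dependence, so square-root cancellation is available). Fourth, sum the resulting bounds over $m\sim M$ and over $n_1,n_2<N$, using the divisor bound $\rho(q)\ll q^\epsilon$ and the hypothesis on $\prod(1-\rho(p)/p)^{-1}$ only insofar as it gives $\rho$ on average; tracking the exponents, the off-diagonal (oscillatory) contribution comes out as $\ll N^{15/4}M^{-9/4}x^{2+\epsilon}$ and the diagonal as $\ll x^{2+\epsilon}$, which together with $\sum_{M<m<2M}1\ll M$ and $|b_n|\le1$ yields the claimed $(1+N^{15/4}M^{-9/4}x)x^{1+\epsilon}$.

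The main obstacle I expect is obtaining genuine square-root cancellation in the exponential sums that arise after Poisson summation, with completely \emph{uniform} control in all the parameters $m,n_1,n_2$ and in the frequency variable. The subtlety is twofold: (i) the modulus $q=mdn_1'n_2'$ is composite and not squarefree in general once one allows the $m$-variable, so one must factor the exponential sum via CRT, isolate prime-power components, and invoke Weil's bound for the squarefree part while treating higher prime powers by an explicit evaluation of Gauss sums --- and the contribution of the ``bad'' primes dividing $\Delta=4a\delta$ (where $\rho(p)$ may behave irregularly) must be bounded separately; (ii) the interchange of the $m$-sum with the frequency sum has to be arranged so that it is the \emph{long} variable $m$ over which one detects cancellation (this is the dispersion-method idea), which forces a careful bookkeeping of which ranges of the dual frequency are ``major'' versus ``minor''. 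Balancing these and arriving precisely at the exponent $15/4$ against $9/4$ (rather than something weaker) is where the real work lies; everything else is careful but routine estimation.
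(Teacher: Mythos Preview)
Your high-level architecture is correct and matches the paper: one opens the square, organizes the result as $W-2xV+x^2U$ (the paper's $\mathscr M(x;M,N)$), evaluates each of $U,V,W$ so that the main terms coincide and cancel, and is left with an error of the stated shape. The point where your proposal diverges from the paper --- and where I think there is a real gap --- is the mechanism that produces cancellation in the $m$-sum.

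After the dispersion reduction, what has to be estimated is, in effect, the Weyl sum
\[
\sum_{\substack{M<m<2M\\ G(\Omega)\equiv 0\ (mq)}} e\!\left(\frac{h\Omega}{mq}\right)
\]
for fixed $q$ (built from $n_1,n_2$) and varying frequency $h$; this is what the paper packages as Proposition~\ref{lemma:4} (equidistribution of the fractions $\Omega/mq$ modulo $1$). Your step~3 proposes Poisson summation in the variable $n\le x$ followed by Weil's bound, but the sum above is \emph{not} a complete or incomplete exponential sum to a fixed modulus: the modulus $mq$ itself is the running variable, and for each $m$ the inner sum over roots $\Omega$ has at most two terms. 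No direct appeal to Weil/Kloosterman applies here. The paper's key device --- which your outline does not mention --- is to parametrize the pairs $(D,\Omega)$ with $G(\Omega)\equiv0\pmod D$ by pairs $(r,s)$ with $D=r^2+s^2$, $(r,s)=1$, $|r|<s$ (Lemma~\ref{lemma_correspondence}; for general $G$ this is the representation of $D$ by binary quadratic forms of discriminant $\delta$, after completing the square). Under this parametrization the phase becomes $e(h\overline r/s)$ up to a smooth twist, and \emph{now} the sum is an incomplete Kloosterman sum in $r$ to modulus $s$, to which Hooley's estimate (Lemma~\ref{lemma_6}, itself a consequence of Weil) applies. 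This is what generates the exponent $3/4$ in the $(qM)^{3/4+\epsilon}$ error of Proposition~\ref{lemma:4}, and it is precisely this $3/4$ that, after summing over $l_1,l_2<x/M$ and $n_1,n_2<N$, produces $N^{15/4}M^{-9/4}x^{2+\epsilon}$ in $W$ (and the matching terms in $U,V$).

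In short: your dispersion framework is right, but the decisive input is not ``Poisson in $n$ plus Weil for the resulting sums''; it is the Hooley--Iwaniec equidistribution of roots of quadratic congruences, whose proof requires the binary-form parametrization that your proposal omits. Without that step the exponential sums you encounter are not of a type that Weil's bound handles, and the argument does not close.
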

We defer the proof of \ref{proposition_1} until Section \ref{majorsucktion} as
several results will be required.
\begin{corollary} \label{corollary1}
Let $\epsilon > 0.$  Then
\begin{equation}
\sum_{m < x^{1-4\epsilon}} \Big| 
\sum_{\substack{n < x^{1/15 - \epsilon} \\ (n,m)=1}} b_n
r(\mathscr{A}; mn)\Big| \ll x^{1-\epsilon} \label{remaindersum}.
\end{equation}
\end{corollary}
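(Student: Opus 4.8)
The plan is to deduce Corollary~\ref{corollary1} from Proposition~\ref{proposition_1} by a dyadic decomposition of the outer variable together with Cauchy--Schwarz, handling small $m$ separately by a trivial estimate. Fix a small $\epsilon>0$, set $N=x^{1/15-\epsilon}$, and break the range $m<x^{1-4\epsilon}$ into $O(\log x)$ dyadic intervals $M<m\le 2M$ (truncating the top interval at $x^{1-4\epsilon}$). On each dyadic interval Cauchy--Schwarz gives
\[
\sum_{M<m\le 2M}\big|B(x;m,N)\big| \le \Big(\sum_{M<m\le 2M}1\Big)^{1/2}\Big(\sum_{M<m\le 2M}B^2(x;m,N)\Big)^{1/2},
\]
and since $M<x$ we may bound the mean square by Proposition~\ref{proposition_1}, obtaining a contribution
\[
\ll M^{1/2}\big((1+N^{15/4}M^{-9/4}x)\,x^{1+\epsilon}\big)^{1/2} \ll M^{1/2}x^{1/2+\epsilon/2}+M^{-5/8}N^{15/8}x^{1+\epsilon/2},
\]
using $(1+N^{15/4}M^{-9/4}x)^{1/2}\le 1+N^{15/8}M^{-9/8}x^{1/2}$.

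Summing over the dyadic intervals, the first term $\sum_M M^{1/2}x^{1/2+\epsilon/2}$ is a geometric series governed by the largest interval $M\asymp x^{1-4\epsilon}$, hence $\ll x^{1-3\epsilon/2}$. The second term $\sum_M M^{-5/8}N^{15/8}x^{1+\epsilon/2}$ is instead governed by the \emph{smallest} interval on which the $L^2$ bound is used; since $M^{-5/8}N^{15/8}x^{1+\epsilon/2}$ is of order $x^{9/8}$ when $M\asymp 1$, one must not apply Proposition~\ref{proposition_1} all the way down. I therefore introduce a threshold $M_0$ and use the above only for $M\ge M_0$, where this term sums to $\ll M_0^{-5/8}N^{15/8}x^{1+\epsilon/2}$; taking, say, $M_0=x^{1/2}$ makes it $\ll x^{13/16-11\epsilon/8}$. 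Both exponents $1-\tfrac32\epsilon$ and $\tfrac{13}{16}-\tfrac{11}{8}\epsilon$ are $\le 1-\epsilon$. This is precisely where the exponent $1/15$ in the length of the $n$-sum enters: it is essentially the largest value for which $M_0^{-5/8}N^{15/8}x<x^{1-\epsilon}$ can hold for a threshold $M_0$ that is still well below $x^{1-4\epsilon}$.

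For the remaining short intervals $m<M_0$ I would abandon Cauchy--Schwarz and use the trivial bound on the sieve remainder. Because $\mathscr{A}=\{G(n):n\le x\}$, counting residues modulo $d$ gives $|\mathscr{A}_d|=\rho(d)x/d+O(\rho(d))$, so $r(\mathscr{A};d)\ll\rho(d)$; then $|B(x;m,N)|\le\sum_{n<N}|r(\mathscr{A};mn)|\ll\sum_{n<N}\rho(mn)$, and summing over $m<M_0$ gives $\ll M_0 N(\log x)^{O(1)}$ by a standard divisor estimate (using that $\rho$ is multiplicative with $\rho(p)\le 2$). For $M_0=x^{1/2}$ and $N=x^{1/15-\epsilon}$ this is $\ll x^{17/30-\epsilon}(\log x)^{O(1)}\ll x^{1-\epsilon}$. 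Combining the three contributions — and noting there is no logarithmic loss from the number of dyadic intervals, since each geometric sum concentrates on a single one — yields the asserted bound $\ll x^{1-\epsilon}$; in fact $M_0$ may be chosen anywhere strictly between (roughly) $x^{1/5}$ and $x^{14/15}$.

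The substantive difficulty is entirely contained in Proposition~\ref{proposition_1}, whose proof is deferred; granting it, the deduction above is routine bookkeeping. The only real decision is the placement of $M_0$: a uniform Cauchy--Schwarz argument is too lossy for small $m$, so one must interpolate between the $L^2$ bound of Proposition~\ref{proposition_1} and the elementary pointwise bound on $r(\mathscr{A};\cdot)$, and then check that the admissible window for $M_0$ is nonempty — which it comfortably is.
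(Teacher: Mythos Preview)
Your argument is correct and follows essentially the same route as the paper: dyadic decomposition in $m$, a trivial pointwise bound $|B(x;m,N)|\ll\rho(m)N$ for small $m$, and Cauchy--Schwarz combined with Proposition~\ref{proposition_1} for large $m$. The only cosmetic difference is the threshold: the paper takes $M_0=x^{14/15-\epsilon}$ (the upper end of your admissible window), which makes the term $N^{15/4}M^{-9/4}x$ become $o(1)$ outright rather than splitting the square root into two pieces as you do, but your observation that any $M_0$ roughly between $x^{1/5}$ and $x^{14/15}$ works is exactly right.
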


\begin{proof}[Proof of Corollary.]  Let $N=x^{1/15-\epsilon}.$
Using dyadic blocks, we see it is sufficient to prove that 

\begin{displaymath}
\sum_{M<m<2M} |B(x; m,N)| \ll x^{1-3\epsilon/2}
\end{displaymath}
since this gives us that the sum in \eqref{remaindersum} is 
$ \ll  (1-4\epsilon) \cdot x^{1-3\epsilon/2} \cdot \log_2 x \ll  x^{1-3\epsilon/2}
x^{\epsilon/2} = x^{1-\epsilon}.$  For $M<x^{14/15-\epsilon},$ we may crudely bound 
\begin{eqnarray*}
|B(x; m, N)| & \leq & \sum_{n<N,~ (n,m)=1} |b_n| |r(\mathscr{A}; mn)|\\
& = & \sum_{n<N,~ (n,m)=1} |b_n| \Big| |\mathscr{A}_{mn}| - \frac{\rho(mn)}{mn}X
\Big|\\
& \leq & \sum_{n<N} \rho(mn) \ll \rho(m) N,
\end{eqnarray*}
as $\mathscr{A}$ is a polynomial sequence and $\rho$ is multiplicative.

For $x^{14/15-\epsilon} \leq M < x^{1-4\epsilon},$ we make use of the
Cauchy-Schwarz inequality 
\begin{eqnarray*}
\sum_{M<m<2M}B(x; m,N) \cdot 1 
&\leq &\Big\{ \sum_{M<m<2M} B(x;m,N)^2 \Big\}^{1/2} ~ \Big\{\sum_{M<m<2M}
1^2 \Big\}^{1/2}\\ 
& \leq & \left\{ (1+N^{15/4}M^{-9/4}x)x^{1+\epsilon} \right\}^{1/2}
M^{1/2}\\
& \leq & \left\{ 1+N^{15/4}M^{-9/4}x \right\}^{1/2} ~ x^{(1+\epsilon)/2}
x^{(1-4\epsilon)/2}\\
& \leq & \left\{ 1 + o_{\epsilon}(1) \right\}^{1/2} ~ x^{1-3\epsilon/2}
\ll x^{1-3\epsilon/2}
\end{eqnarray*}
This completes the proof.
\end{proof}

\paragraph{} The following result is based on the idea that over solutions $x
\mod{m}$ of $G(n) \equiv 0 \mod{m}$, the ratios $x/m$ are uniformly distributed
modulo 1.  We will require several lemmas for its proof so we defer the proof
until later in this section. 
\begin{proposition}
\label{lemma:4}
Let $q$ be a squarefree number with an odd divisor $d,$ $(d,\mu)=1,$ and
$\omega$ be a root of $G(x) \mod d.$  Let $M<M_1<2M$ and $0\leq \alpha < \beta <
1.$  Denote by $P(M_1,M; q,d,\mu,\omega,\alpha,\beta)$ the number of pairs of
integers $m,\Omega$ satisfying 
\begin{equation}
\begin{array}{lll}
M < m < M_1, & (m,q)=1, & m\equiv \mu \mod d,\\
\alpha m q \leq \Omega < \beta m q, & G(\omega) \equiv 0 \mod {mq},&
\Omega\equiv \omega \mod d. \label{Pconditions}
\end{array}
\end{equation}
Then for any $\epsilon>0$ we have
\begin{align*}
P(M_1, M; q,d,\mu,\omega,\alpha,\beta) &= \Cst (\beta-\alpha)(M_1 -M)
\rho\left(\frac{q}{d}\right)\frac{A(q)}{\phi(d)} + O((qM)^{\pwr+\epsilon}),
\end{align*}
where $A(q) = \frac{1}{(2,q)} (\phi(q)/q)^2,$ and $\Cst$ is given by the
product
\begin{align}
\Cst &:=
\frac{6}{\pi^2}\prod_{(p , 2a \delta q) > 1} \left(1-\frac{1}{p^2}\right) 
\prod_{\substack{p \mid \delta \\ (p,q)=1}} \left(1 - \frac{1}{p}\right) 
\prod_{\substack{p\mid 2a \\ (p,\delta q) = 1}} 
	\left(1 - \frac{\chi_{\delta}(p)+1}{p} + \frac{\chi_{\delta}(p)}{p^2}
	\right) \nonumber
	\\ & \qquad \cdot L(1,\chi_{\delta})\prod_{p \mid q}\left(1 -
\frac{\chi_{\delta}(p)}{p}\right).
\end{align}
Here $\chi_{\delta}$ is the Dirichlet character modulo $4\delta$ defined on
primes by $\chi_{\delta}(p) = \jacobi{\delta}{p}.$
\end{proposition}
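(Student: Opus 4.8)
The plan is to count the pairs $(m,\Omega)$ in \eqref{Pconditions} by a two-stage process: first interpret the condition $G(\Omega)\equiv 0\pmod{mq}$, together with $\Omega\equiv\omega\pmod d$, as pinning down $\Omega$ to lie in one of $\rho(mq/d)$ residue classes modulo $mq$ (using multiplicativity of $\rho$, the coprimality $(m,q)=1$, and the fact that $\omega$ is a fixed root mod $d$, so by CRT the number of compatible roots mod $mq$ is $\rho(m)\rho(q)/\rho(d)$ when $(d,\text{stuff})=1$); and then, for each such residue class, count the $\Omega$ in the interval $[\alpha mq,\beta mq)$, which contributes $(\beta-\alpha)mq + O(1)$. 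Summing the main term over $M<m<M_1$ with $(m,q)=1$ and $m\equiv\mu\pmod d$ gives roughly $(\beta-\alpha)q\,\frac{\rho(q)}{\rho(d)}\sum_{m}\rho(m)m$, and the delicate part is evaluating $\sum_{\substack{M<m<M_1,\,(m,q)=1\\ m\equiv\mu(d)}}\rho(m)\,m$ asymptotically; this is where the arithmetic of $G$ enters, since $\rho(p)=1+\chi_\delta(p)$ for $p\nmid 2a\delta$, and the singular series $\Cst$, together with $L(1,\chi_\delta)$ and the local factors $A(q)$, $\rho(q/d)/\phi(d)$, arises from a standard (but lengthy) mean-value computation — essentially a Dirichlet-series / Perron argument or an elementary hyperbola-method manipulation weighting $\rho$ against the character $\chi_\delta$ and isolating the congruence $m\equiv\mu\pmod d$.

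I expect the real obstacle is not the main term but controlling the accumulated $O(1)$ errors, one per $m$, which would only give $O(M)$ and is far worse than the claimed $O((qM)^{7/8+\epsilon})$. To do better one cannot count $\Omega$ in each $m$ separately; instead one should think of the lattice point $(\Omega,m)$ and use the equidistribution of the ratios $\Omega/(mq)$ modulo $1$ alluded to in the paragraph preceding the proposition. Concretely I would detect the interval condition $\alpha\le\Omega/(mq)<\beta$ by a smoothed Fourier expansion $\sum_{h}\widehat{\mathbf 1}_{[\alpha,\beta)}(h)e(h\Omega/(mq))$, so that after swapping the order of summation the inner sum over $m$ (and the root $\Omega=\Omega(m)\bmod mq$) becomes an exponential sum whose cancellation must be exploited. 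Bounding such sums — which after opening the root condition become Gauss-type or Kloosterman-type sums twisted by the reciprocal $h/(mq)$ — is exactly the step that forces the non-trivial exponent $7/8$; this is the heart of the matter, and it is precisely the ``technical complication'' being carried out in the lemmas that the paper says will be proved later in this section.

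Accordingly, the write-up would proceed: (1) reduce to counting $\Omega$ in a fixed residue class mod $mq$ inside a short interval, keeping careful track of how $\rho$ factors and how the fixed root $\omega\bmod d$ and $\mu\bmod d$ cut down the count, so that the shape $\rho(q/d)A(q)/\phi(d)$ becomes visible; (2) extract the main term by summing the smooth ($h=0$) contribution, which is the $\rho$-weighted mean value producing $\Cst(\beta-\alpha)(M_1-M)$; (3) bound the $h\ne 0$ contribution by appealing to the equidistribution lemmas (the deferred ones), getting $O((qM)^{7/8+\epsilon})$; and (4) reconcile the local constants — verifying that the Euler product assembled from $\rho(p)=1+\chi_\delta(p)$, the factor $6/\pi^2$ from squarefreeness of $q$ combined with the $(1-1/p^2)$ corrections, and $L(1,\chi_\delta)$ — is exactly the $\Cst$ in the statement. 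Steps (1), (2), (4) are bookkeeping; step (3) is the genuine analytic input and the main difficulty.
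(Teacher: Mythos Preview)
Your outline is essentially the paper's approach: smooth Fourier expansion of the indicator of $[\alpha,\beta)$ (Lemma~\ref{smooth}), main term from the $h=0$ contribution evaluated as a mean value of $\rho$ over the progression $m\equiv\mu\pmod d$ via the hyperbola method (writing $\rho=\chi_\delta*1*g$), and the $h\ne 0$ error bounded by Kloosterman-type estimates to get the exponent $7/8$.

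Two remarks. First, a slip in your setup: for fixed $m$, each of the $\rho(m)\rho(q/d)$ admissible residue classes modulo $mq$ contributes $(\beta-\alpha)+O(1)$ to the interval count, not $(\beta-\alpha)mq+O(1)$; the interval has length $(\beta-\alpha)mq$ but you are counting representatives of a class mod $mq$. So the main-term sum is $(\beta-\alpha)\rho(q/d)\sum_m\rho(m)$, not $\sum_m\rho(m)m$. Second, and more substantively, you are right that step~(3) is the heart, but you should know \emph{how} the root condition is ``opened'': the paper uses the classical bijection between roots $\Omega$ of $G\bmod D$ and representations of $D$ by a binary quadratic form (Lemma~\ref{lemma_correspondence}, which in the model case $n^2+1$ is $D=r^2+s^2$). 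This parameterization yields $\Omega/D=\bar r/s+\text{(small)}$, and it is the sum $\sum_{r}e(h\bar r/s)$ that is then bounded by Hooley's incomplete Kloosterman estimate (Lemma~\ref{lemma_6}); partial summation handles the small perturbation. Without this representation-of-integers-by-forms input, there is no obvious route from the exponential sum over roots to a Kloosterman sum.
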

\begin{corollary} \label{corollary_1}
Let $qq_1$ be a squarefree number with $\rho(qq_1) \neq 0.$  If $M<M_1<2M$ and
$0 \leq \alpha  < \beta < 1,$ then for any $\epsilon > 0,$
\begin{align*}
\sum_{\substack{M < m < M_1\\ (m,q q_1) = 1}}
\sum_{\substack{\alpha m q \leq \Omega < \beta m q\\ G(\Omega) \equiv 0 \mod
{mq}}} 1
& =  \Cst (\beta - \alpha) (M_1 - M) \rho(q) A(q q_1) \\
& \qquad + O((q q_1 M)^{\pwr+\epsilon}).
\end{align*}
\end{corollary}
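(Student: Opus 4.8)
The plan is to deduce the corollary from Proposition \ref{lemma:4} by summing over all admissible choices of the data $(d,\mu,\omega)$ that the proposition fixes, and then recognising the resulting arithmetic factor as $\rho(q)A(qq_1)$ up to the stated constant. Concretely, write $q_1$ as the product of the primes dividing $q_1$ but not $q$ (since $qq_1$ is squarefree, every prime of $q_1$ is either coprime to $q$ or already divides $q$; the coprime part is what matters). The condition $(m,qq_1)=1$ on the left-hand side of the corollary is the condition $(m,q)=1$ of the proposition together with the extra congruence restrictions $m\not\equiv 0\pmod p$ for each $p\mid q_1$, $p\nmid q$. I would split off the odd part of that coprime part of $q_1$ and call it $d$; the factor $2$, if present, is handled separately because Proposition \ref{lemma:4} requires $d$ odd. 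For the odd $d$, I detect the condition $(m,d)=1$ by summing the proposition's count over all residues $\mu\bmod d$ with $(\mu,d)=1$, and simultaneously over all roots $\omega$ of $G\bmod d$ (there being $\rho(d)$ of them, all automatically coprime to $d$ since $\rho(d)\neq0$ forces $(d,\Delta)=1$), with the compatibility $\Omega\equiv\omega\pmod d$ automatically satisfied once we range $\Omega$ over all solutions mod $mq$ — here one uses CRT to see that fixing a root mod $d$ and a root mod $q$ and asking for $(m,q)=1$ picks out exactly the solutions mod $qd$ we want, and then $m$-part of the modulus contributes $\rho(m)$ more, which is already folded into the count.

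The next step is the bookkeeping of the main term. Summing $\Cst(\beta-\alpha)(M_1-M)\rho(q/d)\,A(q)/\phi(d)$ over the $\phi(d)$ admissible residues $\mu$ and the $\rho(d)$ roots $\omega$ gives $\Cst(\beta-\alpha)(M_1-M)\rho(q/d)\rho(d)A(q)$. Now here $q/d$ in the proposition plays the role of our $q$: relabelling, the "base" squarefree modulus on which we count solutions is our $q$, while $d$ is the extra odd factor coming from $q_1$. So the product $\rho(q/d)\rho(d)$ becomes $\rho(q)$ by multiplicativity of $\rho$, and what remains is to check that $A$ of the proposition's full modulus times the $\Cst$-adjustment equals $A(qq_1)$ up to the same $\Cst$ appearing on both sides. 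This is where one must be careful: the constant $\Cst=\Cst$ in Proposition \ref{lemma:4} itself depends on $q$ through the factors $\prod_{(p,2a\delta q)>1}$, $\prod_{p\mid\delta,(p,q)=1}$, $\prod_{p\mid 2a,(p,\delta q)=1}$, and $\prod_{p\mid q}(1-\chi_\delta(p)/p)$; so replacing $q$ by $q q_1$ (equivalently, absorbing the factor $d$ and the $2$) changes these Euler factors in a controlled way, and the identity to verify is that those changes, multiplied by $A(q)\rho(d)/1$ and the $1/\phi(d)$'s, reassemble exactly into $\Cst_{qq_1}A(qq_1)/A(q)$ — i.e. into the corollary's stated shape with $\Cst$ now carrying the subscript $qq_1$. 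I would verify this prime by prime: for a prime $p\mid q_1$, $p\nmid q$, track how the factor $(1-1/p^2)$ or $(1-\chi_\delta(p)/p)$ or the $2a$-type factor migrates between the $\Cst$ product and $A$, using $A(qq_1)/A(q)=(p/(p-1))^{-2}\cdot$(the $p=2$ adjustment if $2\mid q_1$), together with $\rho(p)$ for the relevant primes.

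Finally, the error terms: each application of Proposition \ref{lemma:4} contributes $O((qM)^{\pwr+\epsilon})$, but the modulus there is really $q/d\cdot$ (stuff), which after relabelling is $\le qq_1$, so each is $O((qq_1M)^{\pwr+\epsilon})$; and we apply the proposition at most $\phi(d)\rho(d)\le d^2\le q_1^2$ times (times a bounded factor for the $2\mid q_1$ case). Since $q_1\le qq_1$ and $\pwr=7/8$, the total error is $O(q_1^2 (qq_1M)^{\pwr+\epsilon})=O((qq_1M)^{\pwr+\epsilon'})$ after adjusting $\epsilon$, which is absorbed into the stated $O((qq_1M)^{\pwr+\epsilon})$ since we are free to relabel $\epsilon$. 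The main obstacle, I expect, is not the error analysis but the Euler-product bookkeeping in the second paragraph: one has to be scrupulous about exactly which primes appear in which product in the definition of $\Cst$ when the modulus changes from $q$ to $qq_1$, and about the special role of the prime $2$ in $A(q)=\frac{1}{(2,q)}(\phi(q)/q)^2$ and in the hypothesis "$d$ odd." Getting the factor $\rho(q)$ rather than $\rho(qq_1)$ on the right-hand side is the tell-tale sign that $q_1$'s primes contribute only through the $A$ and $\Cst$ factors and through the counting multiplicity $\rho(d)$, not through a solution count mod $q_1$ — and making that precise is the crux.
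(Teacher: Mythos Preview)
The paper's proof is a single sentence: it says the corollary follows from considering the quantity $\frac{1}{\rho(q_1)}P(M_1,M;qq_1,1,1,1,\alpha,\beta)$. In other words, apply Proposition~\ref{lemma:4} once with the proposition's $q$ taken to be $qq_1$ and with $d=1$ (so $\mu,\omega$ are trivial), obtaining
\[
P(M_1,M;qq_1,1,1,1,\alpha,\beta)=\Cst(\beta-\alpha)(M_1-M)\,\rho(qq_1)\,A(qq_1)+O\big((qq_1M)^{7/8+\epsilon}\big),
\]
and then divide by $\rho(q_1)$, using $\rho(qq_1)=\rho(q)\rho(q_1)$ since $qq_1$ is squarefree. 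Your programme of summing the proposition over $\phi(d)$ residues $\mu$ and $\rho(d)$ roots $\omega$, followed by a prime-by-prime Euler-product reconciliation between $\Cst_q$ and $\Cst_{qq_1}$ and separate treatment of $p=2$, is far more machinery than the paper intends; none of that bookkeeping is needed.

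There is, however, a genuine subtlety that your plan does not resolve and that the paper's one-liner glosses over. The quantity $P(M_1,M;qq_1,1,1,1,\alpha,\beta)$ has $\Omega$ ranging over $[\alpha mqq_1,\beta mqq_1)$ with $G(\Omega)\equiv 0\pmod{mqq_1}$, whereas the corollary's left-hand side has $\Omega\in[\alpha mq,\beta mq)$ with $G(\Omega)\equiv 0\pmod{mq}$. These two counts are not literally related by a factor of $\rho(q_1)$ term-by-term; they agree only in the main term. Your scheme of summing over $\mu,\omega$ with $d\mid q_1$ runs into the same wall: you still end up counting $\Omega$ modulo $mqq_1$, not $mq$. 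What is really being used is that the \emph{proof} of Proposition~\ref{lemma:4} goes through unchanged when one imposes the stronger coprimality $(m,qq_1)=1$ on $m$ while keeping the $\Omega$-congruence modulo $mq$: the smoothing step and the exponential-sum estimate are insensitive to this, and the main term becomes $(\beta-\alpha)\rho(q)\sum_{(m,qq_1)=1}\rho(m)$, which matches the stated answer. So the honest justification is ``repeat the proof with the extra coprimality condition'' rather than any formal summation identity among the $P$'s.
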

\begin{proof} This follows immediately from Proposition
\ref{lemma:4} upon consideration of the quantity
\begin{align*}
\displaystyle \frac{1}{\rho(q_1)} P(M_1,M; 1,1,1,\alpha,\beta).
\end{align*}
\end{proof}
\begin{corollary} \label{corollary_2}
Let $q$ be a squarefree number with $\rho(q) \neq 0.$  If $M < M_1 < 2M,$ then
for any $\epsilon > 0$ 
\begin{eqnarray*}
\sum_{\substack{M < m < M_1\\ (m,q q_1) = 1}}
			1
& = & \Cst (M_1 - M) A(q) 
 + O((q M)^{\pwr+\epsilon}).
\end{eqnarray*}
\end{corollary}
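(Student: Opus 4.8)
The plan is to derive Corollary~\ref{corollary_2} from Corollary~\ref{corollary_1} by specializing the interval $[\alpha,\beta)$ so that the inner sum over $\Omega$ counts every $m$ in the range exactly once. Concretely, I would take $q_1 = 1$ in Corollary~\ref{corollary_1}, fix an arbitrary root $\omega$ of $G(\Omega) \equiv 0 \mod q$ (if $\rho(q)\neq 0$ there is at least one, and there are exactly $\rho(q)$ of them), and then partition the interval $[0,1)$ into $\rho(q)$-many half-open subintervals $[\alpha_j,\beta_j)$, one adapted to each residue class $\Omega \equiv \omega_j \mod q$, in such a way that for each $m$ with $(m,q)=1$ the congruence $G(\Omega)\equiv 0 \mod{mq}$ has exactly one solution $\Omega$ with $\alpha_j m q \le \Omega < \beta_j m q$. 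Summing Corollary~\ref{corollary_1} over these $\rho(q)$ intervals, the inner double sum collapses to $\sum_{M<m<M_1,\,(m,q)=1} 1$, while on the right-hand side the factors $(\beta_j - \alpha_j)$ sum to $1$, the term $\rho(q)\,A(q)$ is produced (one factor $\rho(q)$ from the count of roots times $A(q)$, noting $\rho(1)=1$), and the $\rho(q)$ error terms of size $O((qM)^{\pwr+\epsilon})$ add to another error term of the same shape since $\rho(q) \ll q^{\epsilon}$.

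Alternatively — and this is probably cleaner to write — I would appeal directly to Proposition~\ref{lemma:4} with $d=1$, $\mu=1$, $\omega=1$, $q$ squarefree with $\rho(q)\neq 0$, and $\alpha=0$, $\beta=1$. With these choices the condition $\alpha m q \le \Omega < \beta m q$ together with $G(\Omega)\equiv 0 \mod{mq}$ counts, for each admissible $m$, exactly $\rho(mq)$ values of $\Omega$; but since $\rho$ is multiplicative and $(m,q)=1$ this is $\rho(m)\rho(q)$, which is \emph{not} what we want. So in fact the first approach via partitioning into $\rho(q)$ narrow windows, each isolating a single solution, is the right one: one must choose $\alpha_j,\beta_j$ so that precisely one root lies in the window for every $m$, which is possible because as $m$ ranges over the admissible residues the solutions $\Omega/(mq)$ equidistribute and, more elementarily, because the $\rho(q)$ roots mod $q$ lift to disjoint arithmetic progressions mod $mq$. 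Then the left-hand side of Proposition~\ref{lemma:4}, summed over the $\rho(q)$ choices of $\omega_j$ and the matching intervals, becomes exactly $\sum_{M<m<M_1,\,(m,q)=1}1$.

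The main obstacle is the bookkeeping needed to guarantee that the windows $[\alpha_j,\beta_j)$ can be chosen uniformly in $m$ so that each one captures exactly one solution $\Omega$; this requires a short argument that the map $m \mapsto \Omega_j(m)/(mq)$, where $\Omega_j(m)$ is the lift of the $j$-th root to $\mathbb{Z}/mq\mathbb{Z}$ via CRT, stays inside a fixed subinterval of length $1/\rho(q)$ — or, more honestly, one simply observes that the quantity $\sum_{M<m<M_1,(m,q)=1} 1$ equals $\frac{1}{\rho(q)}P(M_1,M;q,1,1,1,0,1)$ divided by nothing, because summing the counting function $P$ over all $\rho(q)$ roots $\omega$ with $\alpha=0,\beta=1$ yields $\sum_m \rho(mq)/\rho(q) \cdot$(correction), so the bare specialization already double-counts. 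Cleanest of all: set, in Corollary~\ref{corollary_1}, $q_1=1$ and replace the inner $\Omega$-sum by the observation that $\sum_{\alpha mq \le \Omega < \beta mq,\, G(\Omega)\equiv 0 (mq)} 1$ summed over $[\alpha,\beta)=[0,1)$ is $\rho(mq) = \rho(m)\rho(q)$; this is still off by $\rho(m)$. Hence the honest route is: apply Corollary~\ref{corollary_1} once with any single fixed root and the window $[\omega/q,\ (\omega+1)/q)$ suitably defined, check it isolates one solution per $m$, and conclude. I expect the verification of this isolation property, together with confirming that the resulting $\rho(q)$ error terms sum to $O((qM)^{\pwr+\epsilon})$ using $\rho(q)=q^{o(1)}$, to be the only non-routine part; everything else is direct substitution of $d=1$, $q_1=1$ into the already-established asymptotic.
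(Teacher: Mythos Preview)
Your confusion is well-founded, but the obstacle is a typo in the printed statement, not a gap in your reasoning. The summand on the left-hand side is meant to be $\rho(m)$, not $1$; the corollary should read
\[
\sum_{\substack{M<m<M_1\\ (m,q)=1}} \rho(m) \;=\; \Cst\,(M_1-M)\,A(q) + O\bigl((qM)^{\pwr+\epsilon}\bigr).
\]
(The stray $q_1$ is a second typo, carried over from Corollary~\ref{corollary_1}.) You can see this is the intended statement from how the paper uses the corollary later: in the estimation of $U(M,N)$ it is applied to evaluate $\sum_{M<m<2M,\,(m,n_1n_2)=1}\rho(m)/m^2$ via partial summation. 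An asymptotic for $\sum_{(m,q)=1}1$ involving the arithmetic constant $\Cst$ and the factor $A(q)=(2,q)^{-1}(\phi(q)/q)^2$ would not even be dimensionally correct.

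With that correction, your ``alternative'' approach is exactly the paper's one-line proof: take $d=1$, $\mu=\omega=1$, $\alpha=0$, $\beta=1$ in Proposition~\ref{lemma:4}. Then for each admissible $m$ the pair count is $\rho(mq)=\rho(m)\rho(q)$, so
\[
P(M_1,M;q,1,1,1,0,1)=\rho(q)\sum_{\substack{M<m<M_1\\(m,q)=1}}\rho(m),
\]
and dividing through by $\rho(q)\ (\ge 1)$ gives the corollary, with the error term only improving. The extra factor of $\rho(m)$ you flagged is not a defect of the method---it is the intended summand. All of your windowing and root-isolation machinery can be discarded.
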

\begin{proof} The sum considered is just $\frac{1}{\rho(q)}
P(M_1, M; q, 1, 1, 1, 0, 1).$
\end{proof}

\paragraph{}The following three lemmas will be used in the proof of Proposition \ref{lemma:4}
\begin{lemma} There exists a one-to-one correspondence between the solutions
$\Omega \mod D$ of $\Omega^2 + 1\equiv 0 \mod D$ and the pairs of integers
$(r,s)$ satisfying
\begin{eqnarray}
D = r^2 + s^2,\ (r,s)=1,\ |r|<s. \label{exponential_conditions}
\end{eqnarray}
The correspondence is obtained via the formula
\begin{eqnarray*}
\Omega = \frac{\overline{r}}{s}(r^2+s^2) - \frac r s,
\end{eqnarray*}
where $\overline{r}$ denotes the inverse of $r$ modulo $s.$  
\label{lemma_correspondence}
\end{lemma}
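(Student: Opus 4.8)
The plan is to read the displayed formula as saying that, modulo $D$, the residue $\Omega$ is the ``fraction'' $-r/s$, to verify directly that this prescription produces a square root of $-1$, and then to recover $(r,s)$ from $\Omega$ as a shortest vector of an auxiliary lattice. Throughout I assume $D$ is odd and squarefree, which is the situation that occurs when the lemma feeds into Proposition~\ref{lemma:4}; note that $D=2$ is genuinely exceptional, since $2=1^2+1^2$ violates $|r|<s$. \emph{Well-definedness.} Given $(r,s)$ as in \eqref{exponential_conditions}, let $\bar r$ be an inverse of $r$ modulo $s$. Since $s\mid\bar r r-1$, the number $\Omega=\tfrac{\bar r}{s}(r^2+s^2)-\tfrac{r}{s}=r\cdot\tfrac{\bar r r-1}{s}+\bar r s$ is an integer, and replacing $\bar r$ by $\bar r+s$ changes it by $D=r^2+s^2$, so $\Omega$ is well defined modulo $D$. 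Clearing denominators gives $s\Omega+r=\bar r D$, hence $s\Omega\equiv -r\pmod D$; squaring and using $s^2\equiv -r^2\pmod D$ gives $r^2(\Omega^2+1)\equiv 0\pmod D$, and since $\gcd(r,D)=1$ (a common prime of $r$ and $D=r^2+s^2$ would divide $s$) we conclude $\Omega^2+1\equiv 0\pmod D$.

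\emph{Injectivity.} Suppose $(r_1,s_1)$ and $(r_2,s_2)$ are both sent to $\Omega$. From $s_i\Omega\equiv -r_i\pmod D$ one gets $s_1r_2-s_2r_1\equiv 0\pmod D$ and $r_1r_2+s_1s_2\equiv s_1s_2(\Omega^2+1)\equiv 0\pmod D$. The identity $(s_1r_2-s_2r_1)^2+(r_1r_2+s_1s_2)^2=(r_1^2+s_1^2)(r_2^2+s_2^2)=D^2$, together with $D$ dividing each summand, forces one summand to vanish and the other to be $\pm D$. If $r_1r_2+s_1s_2=0$ then $|r_1r_2|=s_1s_2$, contradicting $|r_i|<s_i$; so $s_1r_2=s_2r_1$, and then $\gcd(r_i,s_i)=1$ gives $s_1=s_2$ and $r_1=r_2$.

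\emph{Surjectivity.} Fix $\Omega$ with $\Omega^2+1\equiv 0\pmod D$ and let $\Lambda=\{(r,s)\in\mathbb Z^2:\ D\mid r+s\Omega\}$, a lattice spanned by $(-\Omega,1)$ and $(D,0)$ of covolume $D$. Every $(r,s)\in\Lambda$ satisfies $r^2+s^2\equiv s^2(\Omega^2+1)\equiv 0\pmod D$, so each nonzero vector of $\Lambda$ has squared length a positive multiple of $D$; on the other hand the open disk of radius $\sqrt{2D}$ has area $2\pi D>4D$, so Minkowski's convex body theorem yields a nonzero $(r,s)\in\Lambda$ with $r^2+s^2<2D$, hence $r^2+s^2=D$. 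Squarefreeness of $D$ gives $\gcd(r,s)=1$, and then $\gcd(s,D)=1$. Since $\Lambda$ also contains $(s,-r)$ (as $r\Omega\equiv -s\Omega^2\equiv s\pmod D$), it contains the four norm-$\sqrt D$ vectors $\pm(r,s),\pm(s,-r)$, and exactly one of them, say $(r_0,s_0)$, satisfies $s_0>|r_0|\ge 0$ (which is possible since $D$ odd forces $|r|\neq|s|$). This $(r_0,s_0)$ satisfies the conditions \eqref{exponential_conditions}, and it is mapped to the residue $\Omega_0$ determined by $s_0\Omega_0\equiv -r_0\pmod D$; but $(r_0,s_0)\in\Lambda$ says $s_0\Omega\equiv -r_0\pmod D$, so cancelling $s_0$ gives $\Omega_0\equiv\Omega\pmod D$. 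Thus every solution lies in the image.

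Conceptually this is the classical bijection between square roots of $-1$ modulo $D$ and the divisors $s+ir$ of $D$ in $\mathbb Z[i]$ with $N(s+ir)=D$, taken up to units, where $|r|<s$ fixes the unit; one could run the whole argument inside $\mathbb Z[i]$ with $\gcd(D,\Omega+i)$ in the role of $s+ir$. I expect surjectivity to be the substantive step — it is the one that actually produces a representation $D=r^2+s^2$ — while well-definedness is a short computation and injectivity is a one-line use of the norm identity; the only real nuisances are the sign and normalization bookkeeping and the standing assumptions that $D$ is odd and squarefree, both automatic in the application.
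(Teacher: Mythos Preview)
The paper does not prove this lemma; it is stated as a classical fact (it goes back at least to Gauss and Smith, and is used in exactly this form by Hooley and Iwaniec), so there is nothing to compare against directly. Your argument is correct and is one of the standard proofs: the well-definedness and injectivity computations are clean, and the Minkowski step on $\Lambda=\{(r,s)\in\mathbb{Z}^2:D\mid r+s\Omega\}$ is the textbook geometric route to the two-squares theorem, refined here to recover the particular root $\Omega$.

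One small point about your standing hypotheses. That $D$ is odd is indeed automatic in the application, since $(D,2a\delta)=1$ whenever $G(\Omega)\equiv 0\pmod D$. But $D$ squarefree is \emph{not}: in the proof of Proposition~\ref{lemma:4} the modulus is $D=mq$ with $q$ squarefree and $(m,q)=1$, and $m$ ranges over all integers in $(M,M_1)$, not only squarefree ones. This does not actually break your surjectivity argument, because the shortest nonzero vectors of $\Lambda$ are always primitive for any odd $D$: they are precisely the four unit multiples in $\mathbb{Z}[i]$ of $\gcd_{\mathbb{Z}[i]}(D,\Omega+i)$, which has no rational prime divisor. That is exactly the $\mathbb{Z}[i]$ picture you sketch in your last paragraph, so either insert that observation where you currently invoke squarefreeness, or run the whole surjectivity step inside $\mathbb{Z}[i]$ as you suggest; both fixes require no new ideas.
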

The following lemma about exponential sums is due to Hooley \cite{Hooley} and
follows from estimates for Kloosterman sums.
\begin{lemma} \label{lemma_6}
If $h$ and $s$ are integers and $0<r_2-r_1 < 2s,$ then 
\begin{eqnarray}
\sum_{\substack{r_1 < r < r_2,\ (r,s)=1 \\ r \equiv \lambda \mod {\Lambda}}}
	e \left(h \frac{\overline{r}}{s}\right) \ll s^{1/2 + \epsilon} (h,s)^{1/2}.
\end{eqnarray}
\end{lemma}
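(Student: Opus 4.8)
The plan is to regard $\Sigma := \sum_{r\in(r_1,r_2),\ (r,s)=1,\ r\equiv\lambda\,(\Lambda)}e(h\overline r/s)$ as an incomplete Kloosterman sum and to bound it by the completion method together with Weil's bound for complete Kloosterman sums, which is the ``estimates for Kloosterman sums'' referred to in the statement. First, two harmless reductions: splitting $(r_1,r_2)$ at its midpoint produces two subintervals of length $<s$, so we may assume the range $I$ of $r$ has length $<s$; and if $\Lambda>2s$ then $I$ meets the progression $r\equiv\lambda\,(\Lambda)$ in at most one point and $\Sigma\ll1$ trivially, so we may assume $\Lambda\le 2s$. I will carry out the argument in detail in the principal case $(\Lambda,s)=1$, which is the only case needed for Proposition~\ref{lemma:4}, and indicate afterwards how the general case reduces to it.

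Assume $(\Lambda,s)=1$. Writing $r=\lambda+\Lambda t$ turns $\Sigma$ into a sum $\sum_{t\in I',\ (\lambda+\Lambda t,\,s)=1}e\bigl(h\,\overline{\lambda+\Lambda t}/s\bigr)$ over an interval $I'$ of length $<s$, whose summand $\phi(t)$ is periodic modulo $s$ because $t\mapsto\lambda+\Lambda t$ is a bijection of $\mathbb{Z}/s\mathbb{Z}$. The completion identity then gives $\Sigma=\tfrac1s\sum_{|n|\le s/2}\widehat{\mathbf{1}_{I'}}(n)\,\widehat\phi(n)$, where $\widehat{\mathbf{1}_{I'}}(n)=\sum_{t\in I'}e(-nt/s)\ll\min(|I'|,\,s/|n|)$ and $\widehat\phi(n)=\sum_{t\bmod s,\ (\lambda+\Lambda t,\,s)=1}e\bigl(h\,\overline{\lambda+\Lambda t}/s+nt/s\bigr)$ is a complete sum. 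Substituting $u=\lambda+\Lambda t$ (so $t\equiv\overline\Lambda(u-\lambda)\bmod s$) and recognising the result as a Kloosterman sum,
\[
\widehat\phi(n)=e(-n\overline\Lambda\lambda/s)\sum_{u\bmod s,\ (u,s)=1}e\!\left(\frac{h\overline u+n\overline\Lambda u}{s}\right)=e(-n\overline\Lambda\lambda/s)\,S(h,\,n\overline\Lambda;\,s),
\]
so by Weil's bound $|\widehat\phi(n)|\le\tau(s)\,(h,n\overline\Lambda,s)^{1/2}s^{1/2}\le\tau(s)\,(h,s)^{1/2}s^{1/2}$, uniformly in $n$, using $(\overline\Lambda,s)=1$.

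It remains to assemble the pieces. Since $\sum_{|n|\le s/2}\min(|I'|,\,s/|n|)\ll s\log s$, the completion identity yields $\Sigma\ll\tfrac1s\cdot s\log s\cdot\tau(s)(h,s)^{1/2}s^{1/2}=\tau(s)(h,s)^{1/2}s^{1/2}\log s\ll s^{1/2+\epsilon}(h,s)^{1/2}$, which is the claim. For the general case one factors $\Lambda=\Lambda_0\Lambda_1$ with $(\Lambda_1,s)=1$ and $\Lambda_0$ composed only of primes dividing $s$, dispatches the progression modulo $\Lambda_1$ exactly as above, and treats the residual progression modulo $\Lambda_0$ by expanding it in additive characters; the complete sums that then arise are again Kloosterman sums modulo $s$ (or its radical), multiplied by Ramanujan-type sums supported on a thin progression of the completion variable, and these two effects cancel to leave the same final bound. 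The one delicate point in the whole argument is precisely this bookkeeping in the non-coprime case; everything else---the completion identity, the interval estimate, and the geometric-tail summation---is routine, so the genuine content of the lemma is entirely Weil's bound for Kloosterman sums, as advertised.
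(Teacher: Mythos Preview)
The paper does not supply its own proof of this lemma: it simply attributes the result to Hooley and remarks that it ``follows from estimates for Kloosterman sums.'' Your argument---completing the incomplete sum to a full period and then invoking the Weil bound $|S(a,b;s)|\le\tau(s)(a,b,s)^{1/2}s^{1/2}$---is exactly the standard route to such a bound and is what Hooley does. The coprime case $(\Lambda,s)=1$ is carried out correctly: the substitution $r=\lambda+\Lambda t$ is a bijection of $\mathbb Z/s\mathbb Z$, the completed sum is a genuine Kloosterman sum, and the estimate $\sum_{|n|\le s/2}\min(|I'|,s/|n|)\ll s\log s$ together with $\tau(s)\log s\ll s^{\epsilon}$ gives the stated bound.

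Two small caveats. First, your assertion that only the case $(\Lambda,s)=1$ is needed for Proposition~\ref{lemma:4} is not entirely evident from the paper's rather compressed description of the congruence conditions on $r$ (which involve the moduli $lq$, $dq$, and $d$, none of which is a priori coprime to the variable $s$ in the representation $D=r^2+s^2$); you should either justify this or genuinely complete the non-coprime argument. Second, in your sketch of the general case, the factorisation $\Lambda=\Lambda_0\Lambda_1$ with every prime of $\Lambda_0$ dividing $s$ does not by itself force $\Lambda_0\mid s$ (higher prime powers can occur), so the additive-character expansion modulo $\Lambda_0$ and the identification of the resulting complete sums as Kloosterman sums of modulus $s$ require a little more care than you indicate. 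These are bookkeeping issues rather than conceptual ones, and the overall approach is sound.
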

We will also need a smooth function to approximate the indicator function for
$[\alpha,\beta].$  
\begin{lemma}\label{smooth}
Let $2C < \beta - \alpha < 1-2C.$  Then there exist $A(t)$ and $B(t)$ such that
$$|\psi(t) - A(t)| = B(t)$$
and 
\begin{eqnarray*}
A(t) = \beta - \alpha + \sum_{h \neq 0} A_h e(ht)\\
B(t) = C + \sum_{h\neq 0} B_h e(ht)
\end{eqnarray*}
with Fourier coefficients $A_h,B_h$ satisfying $|A_h|,|B_h| \leq 
	\displaystyle \min\left(\frac{1}{|h|},\frac{{C}^{-2}}{|h|^3}\right) = C_h.$
\end{lemma}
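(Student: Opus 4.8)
The plan is to produce $A$ and $B$ by the standard Fourier mollification (Vinogradov-type sandwich) device: approximate the $1$-periodic indicator $\psi$ of $[\alpha,\beta]$ from above and below by smooth functions $\chi^+\ge\psi\ge\chi^-$ obtained by convolving the indicator of a slightly dilated, resp.\ contracted, copy of $[\alpha,\beta]$ with a fixed non-negative bump of width $\asymp C$, and then take $A=\tfrac12(\chi^++\chi^-)$ and $B=\tfrac12(\chi^+-\chi^-)$. First I would fix a non-negative kernel $\kappa$ on $\mathbb R$ supported in $[-\tfrac12,\tfrac12]$ with $\int_{\mathbb R}\kappa=1$ whose Fourier transform decays quadratically; the normalized tent $\kappa=4\,\mathbf 1_{[-1/4,1/4]}*\mathbf 1_{[-1/4,1/4]}$ already does, since $\widehat\kappa(\xi)=4\big(\tfrac{\sin(\pi\xi/2)}{\pi\xi}\big)^{2}$ satisfies $|\widehat\kappa(\xi)|\le\min(1,|\xi|^{-2})$. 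Rescaling to $\kappa_C(u)=C^{-1}\kappa(u/C)$ (supported in $[-\tfrac C2,\tfrac C2]$, integral $1$), Poisson summation gives that its $1$-periodization has Fourier coefficients $\widehat\kappa(Ch)$.

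Next I set, as $1$-periodic functions,
$$\chi^{+}=\mathbf 1_{[\alpha-C/2,\ \beta+C/2]}*\kappa_{C},\qquad \chi^{-}=\mathbf 1_{[\alpha+C/2,\ \beta-C/2]}*\kappa_{C}.$$
Here the hypothesis $2C<\beta-\alpha$ makes the inner interval of $\chi^{-}$ non-degenerate, while $\beta-\alpha<1-2C$ keeps the support $[\alpha-C,\beta+C]$ of $\chi^{+}$ inside a single period, so no wrap-around occurs. From the support radius $C/2$ of $\kappa_{C}$ one then checks directly that $\chi^{\pm}$ are smooth, that $0\le\chi^{-}\le\psi\le\chi^{+}\le1$, and that $\int_{0}^{1}\chi^{\pm}=(\beta-\alpha)\pm C$.

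With this in place the rest is mechanical. Put $A=\tfrac12(\chi^{+}+\chi^{-})$ and $B=\tfrac12(\chi^{+}-\chi^{-})\ge0$; their mean values are $\beta-\alpha$ and $C$, so they admit the asserted expansions $A(t)=\beta-\alpha+\sum_{h\ne0}A_{h}e(ht)$, $B(t)=C+\sum_{h\ne0}B_{h}e(ht)$ with $A_{h},B_{h}=\tfrac12\big(\widehat{\chi^{+}}(h)\pm\widehat{\chi^{-}}(h)\big)$. Since $\widehat{\chi^{\pm}}(h)=\widehat{\mathbf 1_{I^{\pm}}}(h)\,\widehat\kappa(Ch)$ with $|\widehat{\mathbf 1_{I^{\pm}}}(h)|\le\frac1{\pi|h|}$, one gets $|\widehat{\chi^{\pm}}(h)|\le\frac1{\pi|h|}\min\!\big(1,(C|h|)^{-2}\big)\le\min\!\big(\tfrac1{|h|},\tfrac{C^{-2}}{|h|^{3}}\big)=C_{h}$, hence $|A_{h}|,|B_{h}|\le C_{h}$; and $\chi^{-}\le\psi\le\chi^{+}$ yields at once $-B\le\psi-A\le B$, i.e.\ $|\psi(t)-A(t)|\le B(t)$, which is the assertion (if literal equality is insisted on, one replaces $B$ by the pointwise majorant $|\psi-A|$, but the inequality is all the later exponential-sum estimates use). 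There is no genuine obstacle: the only things requiring care are that the two decay mechanisms combine to exactly the pattern $\min(|h|^{-1},C^{-2}|h|^{-3})$ — which is why the mollifier must be rescaled at the precise width $\asymp C$ and must have quadratically decaying transform — and that the $\pm C/2$ dilation/contraction is matched to the support radius $C/2$ of $\kappa_{C}$ so that the mean of $B$ comes out to be exactly $C$.
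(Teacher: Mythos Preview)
The paper does not actually prove this lemma; it defers to a reference with the remark that it is ``an easy exercise in Fourier analysis.'' Your construction is the standard Vinogradov sandwich (convolve slightly dilated and contracted copies of the indicator with a rescaled tent kernel, then take half-sum and half-difference), and the verification of the Fourier coefficient bounds is correct: the factor $|\widehat{\mathbf 1_{I^\pm}}(h)|\le(\pi|h|)^{-1}$ supplies the $1/|h|$ decay while $|\widehat\kappa(Ch)|\le\min(1,(C|h|)^{-2})$ supplies the extra $(C|h|)^{-2}$, giving exactly $C_h$.

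One small point you already flag: your construction yields $|\psi(t)-A(t)|\le B(t)$ rather than the equality stated in the lemma. This is harmless, since in the only place the lemma is invoked (the passage from the indicator of $[\alpha mq,\beta mq)$ to the expression \eqref{eqn_P}) it is precisely the inequality that is used to bound the error by $CM\rho(q)+\sum_{h\ne0}C_h|\cdots|$. The ``$=$'' in the statement should be read as ``$\le$''; your parenthetical remark handles this correctly.
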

The proof of Lemma \ref{smooth} is an easy exercise in Fourier analysis.  See
\cite{Fourier}.
\begin{proof}[Proof of Proposition \ref{lemma:4}.] Using Lemma 7 to replace the indicator function $\sum_{\alpha m q \leq \Omega < \beta mq}1$ in
\begin{equation*}
P(M_1,M; q,d,\mu,\omega,\alpha,\beta) 
= \sum_{\substack{M<m<M_1, ~(m,q)=1, ~m \equiv\mu \mod d\\ 0 \leq \Omega < qm,~
G(\Omega) \equiv 0 \mod {mq},~\Omega \equiv \omega \mod d }} \sum_{\alpha m q \leq
\Omega < \beta mq} 1,
\end{equation*} 
we obtain
\begin{align}
P(M_1,M; q,d,\mu,\omega,\alpha,\beta) 
= (\beta-\alpha)\sum_{\substack{M<m<M_1, ~(m,q)=1, ~m \equiv\mu \mod d\\ 0 \leq \Omega <
qm,~G(\Omega) \equiv 0 \mod {mq},~\Omega \equiv \omega \mod d }} 1 \nonumber
\\
+ O\bigg\{C M \rho(q) + \sum_{h \neq 0} C_h\bigg| \sum_{\substack{M<m<M_1, ~(m,q)=1, ~m \equiv\mu \mod d\\ 0 \leq \Omega < qm,~
G(\Omega) \equiv 0 \mod {mq},~\Omega \equiv \omega \mod d }}
e(h\Omega/mq)\bigg|\bigg\}. \label{eqn_P}
\end{align}
\paragraph{}For the main term, we have
\begin{align*}
\sum_{\substack{M<m<M_1, ~(m,q)=1,\ m \equiv\mu \mod d\\ 
				0 \leq \Omega < qm,\ G(\Omega) \equiv 0 \mod {mq},\ 
				\Omega \equiv \omega \mod d }} 1 
&= 
\sum_{\substack{M<m<M_1, ~(m,q)=1 \\m \equiv\mu \mod d}}\ 
	\sum_{\substack{0 \leq \Omega < qm,\ G(\Omega) \equiv 0 \mod {mq} \\
	\Omega \equiv \omega \mod d }} 1 \\
& = 
\sum_{\substack{M<m<M_1,\ (m,q)=1 \\
				m \equiv\mu \mod d}} \ \rho(qm/d)
\end{align*}
since $d|q,$ $(\mu,d)=1$ and $\omega$ is also a solution of $G$ modulo $d.$  As
$(d,mq/d)=1$ and $\rho$ is multiplicative, this sum is 
\begin{equation*}
\rho(q/d)  \sum_{\substack{M<m<M_1, ~(m,q)=1 \\m \equiv\mu \mod d}} \ \rho(m).
\end{equation*}

\paragraph{}To evaluate this sum, we more closely analyze $\rho(n).$  
By our construction of $G(n),$ we see that $G(n)$ has no solutions modulo $p$ for any
$p \mid \Delta$ (or more simply $p \mid 2a \delta$).  That is, for these primes $p,$ it
follows that $\rho(p^r)=0.$
\paragraph{}For $(p,2a\delta)=1,$ we may complete the square giving
\begin{eqnarray}
G(n)=an^2 + bn + c \equiv 0 \mod {p} & \Leftrightarrow & 4a^2 n^2 + 4ab^2n + 4ac
\equiv 0 \mod {p} \nonumber \\
&\Leftrightarrow & (2an+b)^2 \equiv \delta \mod {p}.
\label{eqn_completesquare}
\end{eqnarray}
Thus the number of solutions $\mod p$ is
$\rho(p)$ is $\chi_{\delta}(p)+1.$  And as $(p,2a\delta) = 1$, solutions lift uniquely to
$\mathbb{Z}_{p^r},$ so $\rho(p^r) = \chi_{\delta}(p) + 1.$ 

\paragraph{}As $0\leq \rho(n) \leq n,$ the Dirichlet series for $\rho$ converges
absolutely for $\textrm{Re } s>1.$  Thus for $\textrm{Re } s>1$ we have the product expansion:
\begin{align}
\sum_{n=1}^{\infty} \frac{\rho(n)}{n^s} 
 = 
\prod_{p}
\left\{ 
	\begin{array}{ll}
	1 + \frac{2}{p} + \frac{2}{p^2} + \frac{2}{p^3} + ... & 
		\textrm{if}\ (p,2a\delta) = 1,\ \textrm{and}\ \chi_{\delta}(p)=1,\\
	1 & \textrm{if}\ (p,2a\delta) = 1,\ \textrm{and}\ \chi_{\delta}(p)=-1,\\
	1 & \textrm{if}\ (p,2a\delta) > 1.
	\end{array}
\right\}
\label{EulerProduct}
\end{align}
Rewriting
\begin{eqnarray*}
1 + \frac{2}{p} + \frac{2}{p^2} + \frac{2}{p^3} + ... = 
	\left(1+\frac{1}{p^s}\right)\left(1-\frac{1}{p^s}\right)^{-1},
\end{eqnarray*}
and
\begin{equation*}
1 = \left(1+\frac{1}{p^s}\right)\left(1+\frac{1}{p^s}\right)^{-1},
\end{equation*}
our product \eqref{EulerProduct} becomes
\begin{align*}
\prod_{(p,2a\delta)=1}
	\left(1+\frac{1}{p^s}\right)\left(1 - \chi_{\delta}(p)
		\frac{1}{p^s}\right)^{-1} 
 = 
	\left(F(s)\frac{\zeta(s)}{\zeta(2s)}\right) L(s,\chi_{\delta}).
\end{align*}
Here 
\begin{align*}F(s) = \displaystyle \prod_{p \mid 2a\delta}
\left(1+\frac{1}{p^s}\right)^{-1}\left(1 - \chi_{\delta}(p)
\frac{1}{p^s}\right).\end{align*}

\paragraph{}Define $f(n)$ by
\begin{eqnarray*}
\sum_{n=1}^{\infty}\frac{f(n)}{n^s} 
& = &
F(s) \frac{\zeta(s)}{\zeta(2s)}.
\end{eqnarray*}
By considering the product expansion of $F(s)$ and $\zeta(s)/\zeta(2s)$, we may
deduce the values of $f$ on prime powers.  We divide the task into cases.

If $(p,2a\delta) = 1,$ then $f(p^r)$ can be obtained by considering the product
expansion of $\zeta(s)/\zeta(2s)$ since no factors arise in $F(s).$  
As $\zeta(s)/\zeta(2s) =\prod_{q} (1+q^{-s}),$ it is easy to see that $f(p^r)$
is 1 for $r \leq 1$ and 0 otherwise.

We now look at the cases involving $(p,2a\delta)>1.$  If $p|\delta$ then $\chi_{\delta}(p)=0$ and so the factor
$(1+p^{-s})^{-1}$ from $F(s)$ cancels the factor $(1+p^{s})$ from
$\zeta(s)/\zeta(2s).$ Thus $f(p^r)$ is 0 for $r>0$ (and 1 for $r=0$).  If
$(p,\delta)=1$ and $p|2a,$ then the factor in the product expansion corresponding to $p$ is
\begin{align*}
\bigg(1+\frac{1}{p^s}\bigg)^{-1}\bigg(1 - \chi_{\delta}(p)\frac{1}{p^s}\bigg)
\cdot \bigg(1+\frac{1}{p^s}\bigg)
= \left\{\begin{array}{ll}
\bigg(\displaystyle 1 - \frac{1}{p^s}\bigg) & \textrm{if } \chi_{\delta}(p)=1,\ \textrm{and} \\
\bigg(\displaystyle 1 + \frac{1}{p^s}\bigg) & \textrm{if } \chi_{\delta}(p)=-1.
\end{array}\right.
\end{align*}
Thus the sequence of values $(f(p^r))_{r=0}^{\infty}$ representing $f$ on prime
powers is $(1,-1,0,0,0,...)$ or $(1,1,0,0,0,...)$ as $\chi_{\delta}(p)=1$ or $-1$ respectively.

To summarize, we have
\begin{eqnarray*}
(f(p^r))_{r=0}^{\infty} &=& \left\{
\begin{array}{ll}
(1,1,0,0,0,...), & \textrm{if}\ (p,2a\delta)=1, \\
(1,0,0,0,...),	& \textrm{if}\ p \mid \delta, \\
(1,-1,0,0,0,...), & \textrm{if}\ (p,\delta) =1,\ p\mid 2a,\ \textrm{and}\ 
	\chi_{\delta}(p)=1,\\
(1,1,0,0,0,...), & \textrm{if}\ (p,\delta) =1,\ p \mid 2a,\ \textrm{and}\ 
	\chi_{\delta}(p)=-1.\\
\end{array}\right.
\end{eqnarray*}
Hitting $f$ with $\mu$ gives us $g$ with $f=1*g$:
\begin{eqnarray*}
(g(p^r))_{r=0}^{\infty} &=& \left\{
\begin{array}{ll}
(1,0,-1,0,0,0,...), & \textrm{if}\ (p,2a\delta)=1, \\
(1,-1,0,0,0,...),	& \textrm{if}\ p \mid \delta, \\
(1,-2,1,0,0,...), & \textrm{if}\ (p,\delta) =1,\ p\mid 2a,\ \textrm{and}\ 
	\chi_{\delta}(p)=1,\\
(1,0,-1,0,0,...), & \textrm{if}\ (p,\delta) =1,\ p \mid 2a,\ \textrm{and}\ 
	\chi_{\delta}(p)=-1.\\
\end{array}\right.
\end{eqnarray*}

\paragraph{} To evaluate $\rho = \chi_{\delta}*(1*g)$ we will make use of
Dirichlet's hyperbola method twice.  
The first application gives
\begin{eqnarray}\label{eqn_rho1}
\rho(q/d) \sum_{\substack{M<m<M_1,\ (m,q)=1 \\ m \equiv\mu \mod d}}\ \rho(m) =
\rho(q/d) 
	\sum_{\substack{a \leq M^{1/2} \\ (a,q)=1}} \chi_{\delta}(a) \
	\sum_{\substack{\frac M a < b < \frac{M_1}a,\ (b,\ q/d)=1 \\ 
					b \equiv \mu \overline{a} \mod d}} f(b)  \\ +\  
\rho(q/d) 
	\sum_{\substack{ b < 2 M^{1/2} \\ (b,q)=1}} \ 
	\sum_{\substack{\max(\frac M b,M^{1/2}) < a < \frac {M_1} b \\ 
					a \equiv \mu\overline{b} \mod d,\ (a,q/d)=1}} 
	\chi_{\delta}(a).\nonumber
\end{eqnarray}

The following lemma will help us with the error terms that occur in the
estimation of $\sum f(b).$
\begin{lemma} 
\label{lemma_bound}
We have
\begin{align*}
\sum_{A<i<B} |g(i)| \ll B^{1/2} \qquad \textrm{and} \qquad \sum_{A\leq i}
\frac{|g(i)|}{i} \ll \frac 1 A.
\end{align*}
\end{lemma}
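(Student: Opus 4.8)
The plan is to exploit the explicit description of $g$ on prime powers that was just derived. In every case the sequence $(g(p^r))_{r=0}^\infty$ is supported on $r\le 2$, so $g$ is supported on cubefree integers, and moreover $|g(p)|\le 2$, $|g(p^2)|\le 1$. Writing a cubefree $i$ uniquely as $i = k\ell^2$ with $k,\ell$ squarefree and $(k,\ell)=1$, multiplicativity gives $|g(i)| \le |g(k)|\,|g(\ell^2)| \le 2^{\omega(k)}\cdot 1 \ll_\epsilon k^{\epsilon/2}$, and likewise a crude bound $|g(i)| \ll_\epsilon i^{\epsilon}$ in general. That already shows $|g(i)|$ is small on average, but to get the clean $B^{1/2}$ bound I would instead compare $g$ against a genuine convolution identity rather than estimate termwise.

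The cleanest route: note that in every case $g = \mu * h$ for a function $h$ supported on squares (indeed, reading off the tables, $f$ agrees with the indicator of squarefree numbers up to a factor supported on $p\mid 2a\delta$, hence $g=\mu\cdot(\text{something})$ is, up to a bounded multiplicative twist at the primes dividing $2a\delta$, the function $\mu(n)$ times the indicator that $n$ is squarefull-free in a suitable sense). More concretely, I would write $g = \lambda * j$ where $\lambda$ is a bounded multiplicative function supported on squarefree integers and $j$ is supported on squares with $|j(n)|\le 1$ — one checks this factorization case by case from the prime-power tables (for $(p,2a\delta)=1$ take $\lambda(p)=0$, $j(p^2)=-1$; for $p\mid\delta$ take $\lambda(p)=-1$, $j=1$ at $p$; etc.). Then
\begin{align*}
\sum_{A<i<B}|g(i)| &\le \sum_{d^2 \le B}|j(d^2)| \sum_{A/d^2 < e < B/d^2} |\lambda(e)|
\le \sum_{d^2\le B} \frac{B}{d^2} \cdot \mathbf{1}_{\exists e} \ll \sum_{d\le B^{1/2}} 1 \ll B^{1/2},
\end{align*}
where I have used that $\lambda$ is bounded by $1$ so $\sum_{e<Y}|\lambda(e)|\le Y$, and then summed the harmless tail $\sum_{d>B^{1/2}}$ away; the decisive point is that the square $d^2$ forces $d\le B^{1/2}$. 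For the second estimate, partial summation (or a direct dyadic decomposition) applied to the first bound gives $\sum_{A\le i}|g(i)|/i \ll \sum_{A\le i} i^{-1}\sum_{A\le j\le i}(\text{increment})$, which after rearranging reduces to $\int_A^\infty t^{-2}\cdot t^{1/2}\,dt \ll A^{-1/2}$; but since we only want $\ll 1/A$ and $A^{-1/2}$ is weaker, I would instead bound directly $\sum_{A\le i}|g(i)|/i \le \sum_{d^2\ge 1,\ e\ge A/d^2}\frac{|j(d^2)||\lambda(e)|}{d^2 e} \le \sum_{d}\frac{1}{d^2}\cdot\frac{1}{\max(1,A/d^2)}$, and splitting at $d^2 = A$ gives $\sum_{d^2\le A}\frac{1}{d^2}\cdot\frac{d^2}{A} + \sum_{d^2>A}\frac{1}{d^2} \ll \frac{A^{1/2}}{A} + \frac{1}{A^{1/2}} \ll A^{-1/2}$.

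Since the lemma as stated claims the stronger $1/A$, I would sharpen the second part by peeling off the squarefree factor more carefully: with $i = e d^2$, $\sum_{A\le i}|g(i)|/i \ll \sum_{d\ge 1} d^{-2}\sum_{e\ge \max(1,A/d^2)} |\lambda(e)| e^{-1}$, and the inner sum over the bounded multiplicative $\lambda$ supported on squarefree $e$ is $\ll (\log\!/\,\text{tail})$ — in fact $\sum_{e\ge Y}|\lambda(e)|/e$ converges like $Y^{-1}$ only if $\lambda$ has mean zero, which it does here because the relevant Dirichlet series $\sum\lambda(e)e^{-s}$ is (up to finitely many Euler factors) $1/\zeta(s)$-like, hence entire and small — so $\sum_{e\ge Y}|\lambda(e)|/e \ll Y^{-1+\epsilon}$, giving $\sum_{A\le i}|g(i)|/i \ll \sum_d d^{-2}(d^2/A)^{1-\epsilon} \ll A^{-1+\epsilon}$, and absorbing $x^\epsilon$ as usual. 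The main obstacle is precisely this last point: getting the exponent down to a clean $1/A$ rather than $A^{-1/2+\epsilon}$ requires using cancellation in $\sum_{e}\lambda(e)$ (equivalently, that $g$ is a Möbius-type function with a convergent $L$-series at $s=1$), not just the trivial bound $|\lambda|\le 1$; I would handle it by factoring the Dirichlet series $\sum g(n)n^{-s}$ explicitly from the tables — it is a finite Euler product times $\zeta(2s)^{-1}$ times $L$-factors — and reading off the needed decay, or, if an $\epsilon$ loss is acceptable downstream (which it is, since the final bounds in Proposition \ref{lemma:4} already carry $x^\epsilon$), simply accepting $\ll_\epsilon A^{-1+\epsilon}$ in place of the stated $1/A$.
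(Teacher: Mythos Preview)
Your overall strategy matches the paper's: isolate the contribution of the finitely many primes dividing $2a\delta$, and use that on integers coprime to $2a\delta$ the function $g$ is supported on squares. The paper does this by writing $i=uv$ with every prime of $u$ dividing $2a\delta$ and $(v,2a\delta)=1$; since $g(p^r)=0$ for $r>2$ one has $u\mid (2a\delta)^2$, a finite set, and $|g(v)|\le 1$ with $v$ a square. This immediately gives $\sum_{A<i<B}|g(i)|\le \sum_{u\mid (2a\delta)^2}|g(u)|\,(B/u)^{1/2}\ll B^{1/2}$.

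Two concrete issues with your write-up. First, the Dirichlet-convolution factorisation $g=\lambda * j$ with $\lambda$ squarefree-supported and $j$ square-supported does \emph{not} quite exist: at a prime $p\mid 2a$ with $(p,\delta)=1$ and $\chi_\delta(p)=1$ one has $(g(p^r))=(1,-2,1,0,\dots)$, which forces $\lambda(p)=-2$, $j(p^2)=1$, and then $(\lambda * j)(p^3)=\lambda(p)j(p^2)=-2\neq 0=g(p^3)$. The fix is exactly the paper's: stop trying to convolve and just pull off the whole $u$-part supported on primes dividing $2a\delta$, which is finite. Relatedly, your displayed chain ``$\le \sum_{d^2\le B}\frac{B}{d^2}\cdot\mathbf{1}_{\exists e}\ll \sum_{d\le B^{1/2}}1$'' is not a valid step as written (the justification ``$|\lambda|\le 1$ so $\sum_{e<Y}|\lambda(e)|\le Y$'' would give $\ll B$, not $B^{1/2}$); the correct reason the inner sum is $O(1)$ is that $\lambda$ has \emph{finite support}.

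Second, and more important: do not chase the stated bound $\ll 1/A$. It is unattainable for $\sum_{i\ge A}|g(i)|/i$, because already the squares contribute $\sum_{w^2\ge A}1/w^2\asymp A^{-1/2}$, and there is no cancellation to exploit in a sum of absolute values. Your own computation $\ll A^{-1/2}$ is the truth; the paper's line ``$\sum_{A/u\le v}|g(v)|/v\le u/A$'' is itself a slip (it should read $(u/A)^{1/2}$), and the lemma's second clause should be $\ll A^{-1/2}$. This weaker bound is all that is used later: in the estimate following \eqref{eqn_hyp2} the tail $\sum_{i\ge (M_1/a)^{1/2}}|g(i)|/i$ multiplied by $(M_1-M)/(aq)$ is comfortably absorbed into the error $O\big((M/a)^{3/4}\tau(q)\big)$ with $A^{-1/2}$ in place of $A^{-1}$. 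So drop the paragraph trying to squeeze out $A^{-1+\epsilon}$ via ``mean zero of $\lambda$''; it cannot work for $|g|$, and it is not needed.
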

\begin{proof}
We note that any $i$ can be written as $uv$ with $(v,2a\delta)=1$ and $u$
satisfying the property that each of its prime divisors also divide $2a\delta.$  We also note that if $p \mid 2a\delta$ then $g(p^r)=0$ for $r>2.$   Thus
\begin{align*}
\sum_{A < i < B}  |g(i)|
= \sum_{u: p|u \Rightarrow p \mid 2a\delta} |g(u)| \sum_{\substack{A/u < v < B/u
\\ (v,2a \delta)=1}} |g(v)|
\leq \sum_{u \mid (2a\delta)^2} |g(u)| \sum_{\substack{A/u < v < B/u
\\ (v,2a \delta)=1}} |g(v)|.
\end{align*}
By definition, $g(v)$ is 0 if $v$ is not a square, and $|g(v)| \leq 1$
otherwise.  Thus
\begin{align*}
\sum_{A < i < B}  |g(i)|
\leq \sum_{u \mid (2a\delta)^2} g(u) (B/u)^{1/2} \ll B^{1/2}.
\end{align*}
Similarily for $N>A,$
\begin{align*}
\sum_{A\leq i < N}\frac{|g(i)|}{i}
& \leq \sum_{ u | (2a\delta)^2}\frac{|g(u)|}{u}  \sum_{\substack{A/u \leq v < N/u \\
(v,2a\delta)=1}}\frac{|g(v)|}{v} 
 \leq \sum_{ u | (2a\delta)^2}\frac{|g(u)|}{u}  \sum_{\substack{A/u \leq v <
\infty \\
(v,2a\delta)=1}}\frac{|g(v)|}{v}.
\end{align*}
By the same reasoning above, 
\begin{align*}
 \sum_{\substack{A/u \leq v < \infty \\ (v,2a\delta)=1}}\frac{|g(v)|}{v} \leq
 \frac{u}{A},
\end{align*}
and so taking the limit as $N$ tends to $\infty$ gives
\begin{align*}
\sum_{A\leq i < \infty}\frac{|g(i)|}{i}
\leq \frac{1}{A}\sum_{ u | (2a\delta)^2}|g(u)| \ll \frac{1}{A}.
\end{align*}
\end{proof}
As $\sum_{(i,q)=1} g(i)/i$ is absolutely convergent by Lemma \ref{lemma_bound}, it has a product
expansion which we denote as
\begin{align}
\cst &:=
\frac{6}{\pi^2}\prod_{(p , 2a \delta q) > 1} \left(1-\frac{1}{p^2}\right) \cdot
\prod_{\substack{p \mid \delta \\ (p,q)=1}} \left(1 - \frac{1}{p}\right)
\nonumber\\
&\qquad \cdot \prod_{\substack{p\mid 2a \\ (p,\delta q) = 1}} 
	\left(1 - \frac{\chi_{\delta}(p)+1}{p} + \frac{\chi_{\delta}(p)}{p^2}
	\right).
\end{align}

\paragraph{}Continuing with \eqref{eqn_rho1}, we look at
\begin{align}\label{eqn_sum_f}
\sum_{\substack{M/a < b < M_1/a,\ (b, q/d)=1 \\ 
			b \equiv \mu \overline{a} \mod d}} f(b) 
& =
\sum_{\substack{M/a < ij < M_1/a,\ (ij,q/d)=1 \\ 
				ij \equiv \mu \overline{a} \mod d}} g(i) \cdot 1(j) .
\end{align}
Applying Dirichlet's hyperbola method gives
\begin{align} \label{eqn_hyp2}
\sum_{\substack{M/a < b < M_1/a,\ (b, q/d)=1 \\ 
			b \equiv \mu \overline{a} \mod d}} f(b)  & = 
\sum_{i < (M_1/a)^{1/2},\ (i, q)=1} g(i) \ 
\sum_{\substack{M/ai < j < M_1/ai\\ j \equiv \mu \overline{a
i} \mod d,\ (j, q/d)=1}} 1 \ \nonumber \\ &\qquad +
\sum_{\substack{ j < 2 (M/a)^{1/2} \\ (j,q)=1}} \ 
\sum_{\substack{\max(M/aj,\ (M/a)^{1/2})
				< i < M_1/aj \\ 
				i \equiv \mu\overline{a j} \mod d,\ (i,q/d)=1}} g(i),
\end{align}
the latter sum to be collected in our error.  The first sum is
\begin{eqnarray*}
\sum_{i < (M_1/a)^{1/2},\ (i, q)=1} g(i) \ 
\left\{\phi\left(\frac{q}{d}\right) \frac{M_1-M}{a i q } + O(\tau(q))\right\}.
\end{eqnarray*}
If we add the tail of the series and collect error terms, we have
\begin{align*}
\phi\left(\frac{q}{d}\right) \frac{M_1 - M}{aq} \mathcal{G}_G & + 
O\bigg\{
	\sum_{M/a < i < M_1/a} |g(i)| \tau(q) \ \\ &\qquad+
	\phi\left(\frac{q}{d}\right) \frac{M_1 - M}{aq} 
	\sum_{i \geq (M_1/a)^{1/2},\ (i, q)=1} \frac{|g(i)|}{i} \ 
	\bigg\}, 
\end{align*}
which by Lemma \ref{lemma_bound} is just
\begin{eqnarray}\label{eqn_sumf1}
\phi\left(\frac{q}{d}\right) \frac{M_1 - M}{aq} \mathcal{G}_G + 
O\Big\{\left(\frac{M_1}{a}\right)^{1/2} \tau(q)\Big\}.
\end{eqnarray}
For the second sum in \eqref{eqn_hyp2}, we make use of Lemma \ref{lemma_bound}
again to find that it is
\begin{eqnarray} \label{eqn_sumf2}
\ll \sum_{\substack{ j < 2 (M/a)^{1/2} \\ (j,q)=1}} 
	\left(\frac{M}{a j}\right)^{1/2} 
\ll 
 \left(\frac{M}{a }\right)^{3/4}.
\end{eqnarray}
Collecting \eqref{eqn_sumf1} and \eqref{eqn_sumf2} gives 
\begin{eqnarray}\label{sumf_final}
\sum_{\substack{M/a < b < M_1/a,\ (b, q/d)=1 \\ 
			b \equiv \mu \overline{a} \mod d}} f(b) 
= 
\phi\left(\frac{q}{d}\right) \frac{M_1 - M}{aq} \mathcal{G}_G  
+ O\Big(\tau(q) \left(\frac{M}{a }\right)^{3/4}\Big).
\end{eqnarray}
\paragraph{}Returning to \eqref{eqn_rho1}, we see that 
\begin{eqnarray*}
\rho(q/d) \sum_{\substack{M<m<M_1,\ (m,q)=1 \\ m \equiv\mu \mod d}}\ \rho(m)
= \rho(q/d) \cst \phi\left(\frac{q}{d}\right) \frac{M_1 - M}{q} 
	\sum_{\substack{a \leq M^{1/2} \\ (a,q)=1}} \chi_{\delta}(a)/a  
	\\
\qquad +\ O\Big\{\rho(q) M^{3/4}
		\sum_{\substack{a \leq M^{1/2} \\ (a,q)=1}}
			\frac{|\chi_{\delta}(a)|}{a^{3/4}} + 
 \rho(q/d) 
	\sum_{\substack{ b < 2 M^{1/2} \\ (b,q)=1}} \ 
	\sum_{\substack{\max(M/b,M^{1/2}) < a < M_1/b\\ 
					a \equiv \mu\overline{b} \mod d,\ (a,q/d)=1}} 
	|\chi_{\delta}(a)|\Big\}.
\end{eqnarray*}
The error term simplifies to $O(M^{7/8}),$ and as $\sum_{a \geq M^{1/2},\
(a,q)=1} \chi_{\delta}/a \ll M^{1/2}$ we have
\begin{eqnarray}
\rho(q/d) \sum_{\substack{M<m<M_1,\ (m,q)=1 \\ m \equiv\mu \mod d}}\ \rho(m)
= 
\rho(q/d) \Cst \phi\left(\frac{q}{d}\right) \frac{M_1 - M}{q} 
+ O\Big(\rho(q) M^{7/8}\Big),
\label{eqn_rho}
\end{eqnarray}
where 
\begin{eqnarray*}
\Cst = \cst \cdot L(1,\chi_{\delta})\prod_{p \mid q}\left(1 -
\frac{\chi_{\delta}(p)}{p}\right).
\end{eqnarray*}


For the exponential sum in \eqref{eqn_P}, we first eliminate the condition $(m,q)=1$ using
the principle of inclusion-exclusion as follows:
\begin{equation}
\sum_{m,\Omega} e(h\Omega/D)
= \sum_{l \mid q/d} \mu(l)
\sum_{\substack{qM<d<qM_1,~ D\equiv 0\mod {lq},~D\equiv\mu q \mod {dq}\\
0\leq \Omega < D, ~G(\Omega)\equiv 0 \mod D,~ \Omega \equiv \omega \ mod d}}
e(h\Omega/D). \label{eqn_exp_sum}
\end{equation}

Now we would like to make use of the bijection from lemma
\ref{lemma_correspondence} to parameterize the solutions in the above equation.
Since the inner sum runs only over $\Omega$ such that $G(\Omega) \equiv 0 \mod
D,$ we must have $(D,2a\delta) = 1.$  Hence we may complete the square as in
\eqref{eqn_completesquare} giving 
\begin{eqnarray*}
G(\Omega) \equiv 0 \mod D & \Leftrightarrow & 
		(2a\Omega + b) ^2 \equiv \delta \mod D \\
& \Leftrightarrow &
\Omega = \overline{2a}\left(\frac{\overline{r}}{s} (r^2+s^2) - \frac{r}{s}
-b\right) \\
& & \textrm{and $(r,s)$ satisfy \eqref{exponential_conditions}}
\end{eqnarray*}
(here $\overline{2a}$ denotes the inverse of $2a$ modulo $D$ and as usual $\overline{r}$ denotes the inverse of $r$ modulo $s$).
Writing $D=r^2+s^2$ with $(r,s)=1$, and $|r|<s,$ the conditions 
\begin{eqnarray*}
D \equiv 0 \mod {lq},\ r^2+s^2 \equiv \mu q \mod {dq},\ \textrm{and}\ 
	r+\omega s \equiv 0 \mod d,
\end{eqnarray*}
are equivalent to 
\begin{eqnarray*}
r^2+s^2 \equiv 0 \mod {lq},\ r^2+s^2 \equiv \mu q \mod {dq},\  \textrm{and}\ 
	r+\omega s \equiv 0 \mod d.
\end{eqnarray*}
Thus, equation \eqref{eqn_exp_sum} is
\begin{align*}
\sum_{D,\Omega} e(h\Omega/D) &\leq \tau(q) 
\sum_{(qM/2)^{1/2} < s < (2qM)^{1/2}} 
\sup_{\substack{r_1,r_2 \\ \lambda, \Lambda}}
\bigg| \sum_{\substack{(r,s) = 1,\ r_1 < r < r_2 \\ r \equiv \lambda 
					\mod \Lambda}}
				e\left(
					h \overline{2a}\left(\frac{\overline{r}}{s} - \frac{r}{s(r^2+s^2)}
						-b\right)
				\right)\bigg| \\
& = 
\tau(q)
\sum_{( qM/2)^{1/2} < s < (2qM)^{1/2}} 
\sup_{\substack{r_1,r_2 \\ \lambda, \Lambda}}
\bigg| \sum_{\substack{(r,s) = 1,\ r_1 < r < r_2 \\ r \equiv \lambda 
					\mod \Lambda}}
				e\left(
					h \overline{2a} \frac{\overline{r}}{s}\right) 
				e \left(
					-h \frac{\overline{2a}r}{s(s^2+r^2)}\right)\bigg|.
\end{align*}
Here the supremums are over integers $r_1,r_2, \lambda, \Lambda$ satisfying the
constraint $0 < r_2 - r_1 < 2s.$
To bound the inner sum, we use partial summation.  Define 
\begin{eqnarray*}
E(t) = 
\sum_{\substack{(r,s) = 1,\ r_1 < r < t \\ r \equiv \lambda 
					\mod \Lambda}}
				e\left(
					h \overline{2a} \frac{\overline{r}}{s}\right).
\end{eqnarray*}
Then 
\begin{align*}
\sum_{\substack{(r,s) = 1,\ r_1 < r < r_2 \\ r \equiv \lambda 
					\mod \Lambda}}
				e\left(
					h \overline{2a} \frac{\overline{r}}{s}\right) &
				e \left(
					-h \frac{\overline{2a}r}{s(s^2+r^2)}\right) \\
& = 
\int_{r_1}^{r_2}  e \left(-h \frac{\overline{2a}t}{s(s^2+t^2)}\right)
	\ dE(t) \\
& = 
E(t) e \left(-h \frac{\overline{2a}t}{s(s^2+t^2)}\right)\bigg|^{r_2}_{r_1} 
-\ \int_{r_1}^{r_2} E(t) \ d\left(e \left(-h
	\frac{\overline{2a}t}{s(t^2+s^2)}\right)\right) \\
& \ll 
s^{1/2+\epsilon} (h,s)^{1/2}\left(1 + \int_{r_1}^{r_2} \frac{h
|t^2-s^2|}{s(s^2+t^2)^2} \ dt\right),
\end{align*}
the last line following from Lemma \ref{lemma_6}.  The triangle inequality gives
$|t^2-s^2| \leq s^2+t^2$ so 
\begin{align*}
\sum_{\substack{(r,s) = 1,\ r_1 < r < r_2 \\ r \equiv \lambda 
					\mod \Lambda}}
				e\left(
					h \overline{2a} \frac{\overline{r}}{s}\right) 
				e \left(
					-h \frac{\overline{2a}r}{s(s^2+r^2)}\right) 
& \ll  
s^{1/2+\epsilon} (h,s)^{1/2}\left(1 + \frac{h}{s^2}\int_{r_1}^{r_2} 
	\frac{s}{(t/s)^2+1} \ dt\right) \\
&\ll 
s^{1/2+\epsilon} (h,s)^{1/2}\left(1 + \frac{h}{s^2}\right).
\end{align*}	
Consequently, 
\begin{eqnarray*}
\sum_{D,\Omega} e(h \Omega/D) & \ll &
\tau(q) (qM)^{1/4+\epsilon} \left(1 + \frac{h}{qM}\right) 
	\sum_{s < (2qM)^{1/2}} (h,s)^{1/2} \\
& \ll &
\tau(h) (qM)^{3/4+\epsilon} \left(1 + \frac{h}{qM}\right).
\end{eqnarray*}

\paragraph{} Recall from Lemma \ref{smooth} that $ C_h \leq \min
(|h|^{-1},C^{-2} |h|^{-3}).$  Hence, the error
term in \eqref{eqn_P} is 
\begin{align*}
\ll CM \rho(q) + \sum_{h\neq 0} C_h \left(1+\frac{h}{qM}\right)(qM)^{3/4+\epsilon} \tau(h) 
\ll 
\left(1+\frac{1}{C qM}\right) \log^2 C.  
\end{align*}
Choosing $C=1/qM$ gives an error term 
$\ll (qM)^{3/4+\epsilon}$
which completes the proof of Proposition \ref{lemma:4}.
\end{proof}
\section{Proof of Proposition \ref{proposition_1}}\label{majorsucktion}
By definition of $B(x; m,N),$
\begin{equation}
B(x; m,N) = \sum_{\substack{n<N \\ (n,m)=1}} b_n |\mathscr{A}_{mn}| - \frac{\rho(m)}{m}x
\sum_{\substack{n<N \\ (n,m)=1}} b_n \frac{\rho(n)}{n}. \label{defnB}
\end{equation}
We may write the first sum as
\begin{eqnarray*}
\sum_{n<N,~(n,m)=1} b_n |\mathscr{A}_{mn}|
&=&\sum_{\substack{n<N \\ (n,m)=1}} b_n \sum_{\substack{k<x \\ G(k) \equiv 0
\mod{mn}} } 1\\
&=&
\sum_{\substack{0<v<m \\ G(v) \equiv 0 \mod m}} \sum_{\substack{n<N \\ (n,m)=1}}
b_n
\sum_{\substack{k<x,~k\equiv v \mod m \\ G(k) \equiv 0 \mod n}} 1.
\end{eqnarray*}
And the approximation in \eqref{defnB} can be re-written as
\begin{eqnarray*}
\frac{\rho(m)}{m}~x~\sum_{\substack{n<N\\(n,m)=1}} b_n \frac{\rho(n)}{n} 
&=&
\sum_{\substack{0<v<m \\ G(v) \equiv 0 \mod m}} \sum_{\substack{n<N \\ (n,m)=1}}
Y(m),
\end{eqnarray*}
where 
\begin{displaymath}
Y(m) =  \sum_{\substack{n<N \\ (n,m)=1}} b_n \frac{\rho(n)}{n}.
\end{displaymath}
Thus \eqref{defnB} is
\begin{equation}
B(x; m,N) =
\sum_{\substack{0<v<m \\ G(v) \equiv 0 \mod m}} \Big\{ \sum_{\substack{n<N \\ (n,m)=1}}
b_n \sum_{\substack{k<x \\ k\equiv v \mod m \\ G(k) \equiv 0 \mod n}} 1 -
\frac{x}{m}~ Y(m) \Big\}.
\end{equation}
For sake of applying Cauchy-Schwarz again, we consider
\begin{eqnarray*}
\mathscr{M}(x; M,N) &=&
\sum_{M<m<2M}\sum_{\substack{0<v<m \\ G(v) \equiv 0 \mod m}} 
\Big\{\sum_{\substack{n<N \\ (n,m)=1}} b_n \sum_{\substack{k<x,~k\equiv v \mod m
\\ G(k) \equiv 0 \mod n}} 1 - \frac{x}{m} ~ Y(m) \Big\}^2\\
&=& 
\sum_{M<m<2M}\sum_{\substack{0<v<m \\ G(v) \equiv 0 \mod m}}
\Big\{ \sum_{\substack{n_1,n_2<N\\  (n_1 n_2,m)=1}} b_{n_1}b_{n_2}
\sum_{\substack{k_1,k_2<x \\ k_1,k_2\equiv v \mod m \\ G(k_1),G(k_2) \equiv 0 \mod
n}} 1 - 2\frac{x}{m} ~  \\& &  Y(m) \sum_{\substack{n<N \\ (n,m)=1}}b_n
\sum_{\substack{k<x,k\equiv v\mod m \\ G(k)\equiv0 \mod n}} 1 + \left(\frac x m
Y(m)\right)^2 \Big\}^2\\
&=&W(x; M,N) - 2xV(x; M,N) + x^2 U(M,N),
\end{eqnarray*}
taking $W,V,U$ to be the respective quantities.  
Over the next three subsections, we will obtain sufficient bounds for $U,V$ and
$W.$
\subsection{Estimation of $U(M,N)$}
We have
\begin{eqnarray*}
U(M,N) &=& \sum_{M<m<2M} \sum_{\substack{0<v<m \\ G(v) \equiv 0 \mod
m}}\left(\frac{1}{m}\sum_{n<N,~(n,m)=1}\frac{\rho(n)}{n}\right)^2\\
&=&\sum_{M<m<2M} \sum_{\substack{0<v<m \\ G(v) \equiv 0 \mod
m}}\frac{1}{m^2}\sum_{\substack{n_1,n_2 <N\\(n_1n_2,m)=1}}
\frac{\rho(n_1)\rho(n_2)}{n_1 n_2}\\
&=&\sum_{n_1,n_2<N} b_{n_1} b_{n_2} \frac{\rho(n_1) \rho(n_2)}{n_1 n_2}
\sum_{\substack{M<m<2M \\ (m,n_1 n_2)=1}} \frac{\rho(m)}{m^2}.
\end{eqnarray*}

Let $F(t) = \sum_{\substack{M<m<t\\(m,n_1n_2)=1}}1.$  Summing by parts and using Corollary \ref{corollary_2} gives
\begin{align}
\sum_{\substack{M<m<2M \\ (m,n_1 n_2)=1}}  \frac{\rho(m)}{m^2} &=  
\int_{M}^{2M^{-}}\frac{1}{t^2} \ dF(t) \nonumber \\
&= \frac{1}{4}M^{-2}\sum_{\substack{M<m<2M\\(m,n_1n_2)=1}} \rho(m) +
2\int_{M}^{2M}\sum_{\substack{M<m<t\\(m,n_1n_2)=1}} \rho(m)\frac{1}{t^3} ~~dt
\nonumber \\
&= \frac{\Cst}{4}M^{-1} A(n_1 n_2) + 2 \Cst A(n_1 n_2) \int_{M}^{2M}
\frac{(t-M)}{t^3} ~dt \nonumber \\
&\qquad + O\left\{\left(\frac{1}{4}M^{-2}+\int_{M}^{2M} \frac{1}{t^3} dt\right)(n_1 n_2
M)^{\pwr+\epsilon}\right\}\nonumber \\
&= \frac{\Cst}{4} M^{-1}A(n_1 n_2) + 2\Cst A(n_1 n_2)
\left(\frac{-1}{t}\rvert_M^{2M} -
M\frac{-2}{t^2}\arrowvert_M^{2M}\right)\nonumber \\
& \qquad + O\left(M^{-2}(n_1 n_2 M)^{\pwr+\epsilon}\right)\nonumber \\
&= \frac{\Cst}{2} A(n_1 n_2)M^{-1} +O((n_1 n_2
M)^{\pwr+\epsilon}M^{-2}).\label{eU}
\end{align}
This is sufficient for our purposes.  

\subsection{Estimation of $V(x; M,N)$}
We have by definition of $V(x; M,N)$

\begin{eqnarray*}
V(x; M, N) & = & \sum_{\substack{M<m<2M\\0\leq v < m, ~G(v) \equiv 0 \mod m}} \frac{1}{m}
\bigg(\sum_{\substack{n_1<N\\(m,n_1)=1}} b_{n_1} \sum_{\substack{k<x, G(k)\equiv
0\mod m \\ k\equiv v \mod m}} 1\bigg)\cdot
\bigg(\sum_{\substack{n_2<N\\(n_2,m)=1}} b_{n_2} \frac{\rho(n_2)}{n_2}\bigg).
\end{eqnarray*}
Let the symbol $\Theta_{n_1,n_2}$ denote the set of triples $(m,v,k)$ satisfying
\begin{equation}
\begin{array}{lll}
M<m<2M, & (m,n_1 n_2) = 1;\\
0\leq v < m,& G(v) \equiv 0 \mod m;\\
k<x, &k\equiv v \mod m,& G(k)\equiv 0 \mod {n_1},
\end{array}\label{ThetaEqn}
\end{equation}
and let 
\begin{align*}
S(n_1,n_2; x,M) = \sum_{\Theta_{n_1,n_2}} \frac{1}{m}.
\end{align*}
Then 
\begin{eqnarray*}
V(x;M,N) = \sum_{n_1,n_2<N} b_{n_1} b_{n_2} \frac{\rho(n_2)}{n_2}
S(n_1,n_2,x,M).
\end{eqnarray*}
Writing $k=v+ml$ for $l<x/M,$ we may replace the conditions on $k$ in
\eqref{ThetaEqn} giving

\begin{equation}
\begin{array}{lll}
M<m<\textrm{min}(2M,(x-v)/l), & (m,n_1 n_2) = 1;\\
0\leq v < m,& G(v) \equiv 0 \mod m,&G(ml+v) \equiv 0 \mod {n_1}.
\end{array}\label{Theta2Eqn}
\end{equation}

Note that $M<m=(k-v)/l<(x-v)/l$ implies that $l<(x-v)/M\leq x/m$ so
\eqref{ThetaEqn} and \eqref{Theta2Eqn} are equivalent.  As $0\leq v < m,$
replacing $(x-v)/l$ by $x/l$ gives (small) error:
\begin{align*}
\sum_{1\leq l < x/M}\sum_{(x-2M)/l<m<x/l} 
m^{-1} \sum_{\substack{0\leq v < m \\ G(v) \equiv 0 \mod {m}}}
								\sum_{n_1 \mid G(ml + v)} 1
\end{align*}
which is 
\begin{align*}
\ll  \sum_{1\leq l < x/M}\sum_{(x-2M)/l<m<x/l}
m^{-1} \rho(m) 
&\ll  \displaystyle \sum_{1\leq l < x/M
}\sum_{(x-2M)/l<m<x/l} m^{-1} x^{\epsilon/2} \\
&\ll  \displaystyle \frac{x}{M} \cdot \left(\frac{2M}{l} \cdot
\frac{l}{x}\right) x^{\epsilon/2} \ll x^{\epsilon}.
\end{align*}

\paragraph{} In an attempt to further simplify matters, we let $c$ be $l \mod
{n_1}$ ($0\leq c < n_1$).  This allows us to replace the conditions
$G(v) \equiv 0 \mod m$ and $G(ml+v) \equiv 0 \mod {n_1}$ with $G(\theta) \equiv 0
\mod {m n_1}$ where $\theta = cm+v.$  Note that $m$ and $n_1$ are relatively prime
by assumption. Also as $0\leq v < m$ we have the condition $cm\leq \theta <
(c+1)m.$  Let $S^{*}(n_1,n_2; x,M)$ denote the sum approximating $S(n_1,n_2;x,
M)$.  Then 

\begin{eqnarray}
S^*(n_1,n_2; x,M) = \sum_{l<x/M}
\sum_{\substack{M<m<\textrm{min}(2M,x/l), (m,n_1,n_2) = 1\\
cm \leq \Omega < (c+1) m, G(\Omega ) \equiv 0 \mod {m n_1} }} m^{-1}.
\label{eqn1}
\end{eqnarray}

Define 
\begin{equation*}
S_0(t) = \sum_{\substack{M<m<t,\ (m,q q_1) = 1 \\
				\alpha m q \leq \Omega < \beta m q,\ G(\Omega)\equiv 0 \mod {mq}}} 
					m^{-1}.
\end{equation*}
Then from using partial summation with Corollary \ref{corollary_2}, we have

\begin{align*}
S_0(M_1)  & = 
\int_{M}^{M'} S(t)\frac{1}{t^2}\ dt + S(t)\frac{1}{t}|_{M}^{M'} \\
& = 
\int_{M}^{M'} \left\{\Cst (\beta - \alpha)(t-M) \rho(q) A(q q_1) 
  + O((q q_1 M)^{\pwr + \epsilon})\right\}\frac{1}{t^2}\ dt \\
& \qquad + O\left(\frac{1}{M'}\big\{\rho(q) A(q q_1) (M'-M) + (q q_1
M)^{\pwr+\epsilon}\big\} \right)  \\
& = 
\int_{M}^{M'} \Cst (\beta - \alpha)\frac{1}{t} \rho(q) A(q q_1)
			\ dt + O((q q_1 M)^{\pwr+\epsilon}) \\
& = 
\Cst (\beta - \alpha) \log\left(\frac{M'}{M}\right) \rho(q) A(q q_1)
	+ O((q q_1 M )^{\pwr+\epsilon}) 
\end{align*}
for any $0\leq \alpha < \beta \leq 1$ and $\epsilon >0.$  
Applying this to \eqref{eqn1} with $\alpha=c/a$ and
$\beta=(c+1)/q$ gives 

\begin{align*}
S^*(n_1,n_2; x,M) & =
\sum_{l<x/M}
\Cst  \log\left(\textrm{min}\left(2,\frac{x}{lM}\right)\right)
\frac{\rho(n_1)}{n_1}
A(n_1 n_2) + O(M^{-1} (n_1 n_2 M)^{\pwr+\epsilon})\\
& = 
\frac{\Cst}{2} \frac{x}{M} 
\frac{\rho(n_1)}{n_1}
A(n_1 n_2) + O(\frac{\rho(n_1)}{n_1} (\log x + x M^{-2} (n_1 n_2
M)^{\pwr+\epsilon})).
\end{align*}
The last line follows from writing
\begin{eqnarray*}
\sum_{l < x/M} \log\left(\textrm{min}\left(2,\frac{x}{lM}\right)\right) 
& = & 
\sum_{l \leq x/2M} \log 2 + \sum_{x/2M}^{x/M} \log \frac{x}{lM} \\
& = &
\frac{x}{2M} \log 2 + O(1) + \sum_{x/2M}^{x/M} \log \frac{x}{lM}
\end{eqnarray*}
and using the partial summation on the second sum with the summatory function
$L(t) = \sum_{x/2M < l < t} 1$:
\begin{eqnarray*}
\int_{x/2M}^{x/M} \log \frac{x}{tM} \ dL(t) 
& = & L(t) \log \frac{x}{tM} \Big|_{x/2M}^{x/M} +
\int_{x/2M}^{x/M} (t - \frac{x}{2M} + O(1)) \frac{1}{t} \ dt\\
& = &
0 + \int_{x/2M}^{x/M} (t - \frac{x}{2M} + O(1)) \frac{1}{t} \ dt\\
&& \frac{x}{2M} - \frac{x}{2M} \log 2 + O(\log x).
\end{eqnarray*}
Collecting our results, we have
\begin{eqnarray*}
V(x; M,N) 
& = &
\frac{\Cst}{2} \frac{x}{M} 
\sum_{n_1,n_2 < N} b_{n_1} b_{n_2} \frac{\rho(n_1)\rho(n_2)}{n_1 n_2}
A(n_1 n_2) \\
&& +\ O\left\{\sum_{n_1,n_2 < N} \left(\frac{\rho(n_2)}{n_2} x^{\epsilon} +
\frac{\rho(n_1)\rho(n_2)}{n_1 n_2} (\log x + x M^{-2} (n_1 n_2
M)^{\pwr+\epsilon}\right)\right\}.
\end{eqnarray*}
And since $\rho(n) \ll n^{\epsilon},$ 
\begin{align}
V(x; M,N) & = 
\frac{\Cst}{2} \frac{x}{M} 
\sum_{n_1,n_2 < N} b_{n_1} b_{n_2} \frac{\rho(n_1)\rho(n_2)}{n_1 n_2}
A(n_1 n_2) \nonumber \\
& \qquad +\ O(x M^{-2} (N^2 M)^{\pwr+\epsilon}).\label{eV}
\end{align}
\subsection{Estimation of $W(x; M,N)$}
Let the symbol $\Phi_{n_1,n_2}$ denote the set of quadruples $(m,v,k_1,k_2)$ such that 
\begin{align*}
M < m < 2M,\  & (m,n_1 n_2) = 1,\\
0 \leq v < m,\  & G(v) \equiv 0 \mod m,\\
k_1,k_2 < x,\ & k_1 \equiv k_2 \equiv v \mod m,\\
G(k_1) \equiv 0 \mod {n_1},\  &\ G(k_2) \equiv 0 \mod {n_2};
\end{align*}
and let 
\begin{align*}
T(n_1,n_2; x, M) = \sum_{\Phi_{n_1,n_2}} 1.
\end{align*}
Then
\begin{eqnarray*}
W(x; M,N)  =  \sum_{n_1, n_2 < N} b_{n_1} b_{n_2} T(n_1,n_2; x,M).
\end{eqnarray*}
Proceeding in a similar fashion as the last section, we write 
\begin{eqnarray*}
k_1 = m l_1 + v,& k_2 = m l_2 + v,
\end{eqnarray*}
where $l_1,l_2 < x/M.$  Then we may re-parameterize the sum $T(n_1,n_2; x,M)$ in
terms of the variables $m,v,l_1,$ and $l_2.$  As $l_1,l_2 < x/M,$ we may
eliminate the conditions on  $k_1$ and $k_2$ with $m < (x-v)/l_1,\ (x-v)/l_2.$  Thus we have equivalent conditions
\begin{align}
M < m < \min\left(2M, \frac{x-v}{l_1}, \frac{x-v}{l_2}\right),\ &  (m,n_1 n_2) = 1 \label{eqn_replace} \\
0 \leq v < m,\ & G(v) \equiv 0 \mod m,\nonumber \\
m < \frac{x-v}{l_1},\ & m< \frac{x-v}{l_2} \nonumber \\
G(m l_1 + v) \equiv 0 \mod {n_1},\ & G(m l_2 + v) \equiv 0 \mod{n_2}.\label{l1_l2_cond}
\end{align}
Again, we note that $0\leq v < m$ so replacing $x-v$ by $x$ in
\eqref{eqn_replace} results in an error
\begin{align*}
2 \sum_{l_1  \leq l_2 < x/M}
	\sum_{(x-2M)/l_2 < m < x/l_2}
	\sum_{\substack{0 \leq v < m \\ G(v) \equiv 0 \mod m}}
	\sum_{\substack{n_1 \mid G(ml_1+v) \\ n_2 \mid G(ml_2 + v)}} 1
\end{align*}
which is
\begin{align*}
\ll
\sum_{l_1  \leq l_2 < x/M}
\sum_{(x-2M)l_2 < m < x/l_2}
	\rho(m) 
\ll 
x^{1+\epsilon/2} \sum_{l_2 < x/M} \frac{M}{l_2} \ll x^{1+\epsilon}.
\end{align*}
Denote the modified sum by $T^*(n_1,n_2; x,M).$  In an attempt to combine the
conditions \eqref{l1_l2_cond}, let $c$ be the solution of the
system of congruences
\begin{eqnarray*}
c \equiv l_1\  \bigg(\textrm{mod}\ {\frac{n_1}{(n_1,n_2)}}\bigg), & c \equiv
l_2\ \bigg(\textrm{mod}\ {\displaystyle \frac{n_2}{(n_1,n_2)}}\bigg),\\
c \equiv l_1\  \bigg(\textrm{mod}\ {(n_1,n_2)}\bigg), & 0\leq c < [n_1,n_2].
\end{eqnarray*}
Such a solution exists and is unique by the Chinese remainder theorem.  Let
$\Omega = cm + v.$  Then 
\begin{equation}
m \leq \Omega < (c+1)m\  \textrm{and}\  G(\Omega) \equiv 0 \mod {m[n_1,n_2]} \label{eq_4}
\end{equation}
the latter following from \eqref{l1_l2_cond}.  

If the congruence conditions in
\eqref{l1_l2_cond} are satisfied then by construction of $G(n),$ we must have
that both $n_1$ and $n_2$ are odd. Consequently, $d = (n_1,n_2)$ is odd and we can deduce from \eqref{l1_l2_cond} that
\begin{eqnarray}
v \equiv -m \frac{l_1 + l_2}{2} \mod d \qquad \textrm{and} \qquad
G\bigg(m\cdot \frac{l_1-l_2}{2}\bigg)
\equiv 0 \mod d.\label{eq_3}
\end{eqnarray}
Let $\mu$ be the the reduced residue class of $m$ modulo $d,$ and let $\omega =
\left(c - \frac{l_1+l_2}{2}\right)\mu.$  Then we see that
$\omega  =  \mu (l_1 - l_2)/2  \equiv cm +v \equiv \Omega \mod d$ and 
so $G(\mu (l_1 - l_2)/2) \equiv 0 \mod d.$
Then \eqref{eq_4}
and \eqref{eq_3} give
\begin{eqnarray}
G\bigg(\mu \frac{l_1 - l_2}{2}\bigg) \equiv 0 \mod d & \textrm{and} & \Omega \equiv \omega
\mod d.
\end{eqnarray}
Using the above substitutions, we have
\begin{eqnarray*}
T^{*}(n_1,n_2; x, M) = 
\sum_{\substack{l_1,l_ < x/M \\
				l_1 \equiv l_2 \mod {(2,n_1,n_2)}}}
\sum_{\mu: G(\mu) \equiv 0 \mod d}
\sum_{\substack{M < m < \min (2M, x/l_1, x/l_2),\ 
(m,n_1 n_2) = 1\\
m \equiv \mu \mod d,\ G(\Omega) \equiv 0 \mod {m[n_1,n_2]}}} 1.
\end{eqnarray*}
The inner most sum is $P(M_1, M; q, d, \mu, \omega, \alpha, \beta).$  Here 
\begin{align*}
M_1 &= \min \left(2M, \frac{x}{l_1},\frac{x}{l_2}\right)\\
q &= [n_1,n_2]\\
\alpha &= c/q,\ \beta = (c+1)/q.
\end{align*}
Thus, by Proposition \ref{lemma:4}, we have
\begin{align*}
T^{*}(n_1,n_2; x, M) 
& = 
\sum_{\substack{l_1,l_2 < x/M \\
				l_1 \equiv l_2 \mod {(2,n_1,n_2)}}}
\sum_{\mu: G(\mu) \equiv 0 \mod d}
\bigg\{\Cst \frac{\rho(q/d)}{q} \frac{A(q)}{\phi(d)} \\
& \qquad \cdot \left( \min \bigg\{2M, \frac{x}{l_1},\frac{x}{l_2}\bigg\}- M\right) +  O((qM)^{\pwr+\epsilon})\bigg\}\\
& =  \sum_{\substack{l_1,l_2 < x/M \\
				l_1 \equiv l_2 \mod {(2,n_1,n_2)}}}
\bigg\{\Cst \frac{\rho(q)}{q} \frac{A(q)}{\phi(d)}\left(
\min \left(2M, \frac{x}{l_1},\frac{x}{l_2}\right)- M\right)\\
&\qquad +  O((qM)^{\pwr+\epsilon})\bigg\} 
\end{align*}
Substituting $q,d$ we see this is 
\begin{eqnarray*}
\frac{2 \cdot \Cst \cdot \rho([n_1,n_2])}{n_1,n_2}  \frac{A(n_1 n_2)}{\phi((n_1,n_2))}
\sum_{\substack{l_1 < l_2 < x/M \\
				l_1 \equiv l_2 \mod {(2,n_1,n_2)}}}
\phi((n_1,n_2,l_1-l_2))\left(\min \left(2M, \frac{x}{l_2}\right)-
M\right)\\
 + O\left(\frac{(n_1 n_2)}{n_1 n_2} \rho(n_1 n_2) x\right) +
 O\left(\left(\frac{x}{M}\right)^2(n_1 n_2 M)^{\pwr+\epsilon}\right);
\end{eqnarray*}
and here, the first error term comes from the terms in the sum where $l_1 =
l_2.$  Also note that $A([n_1,n_2]) = A(n_1 n_2)$ as $\phi(q)/q = \phi(qy)/qy$ and $(2,q) = (2,qy)$ for any $y$ dividing $q.$  

For the sake of using Dirichlet's hyperbola method, we write $\phi = 1* \psi$
for some multiplicative function $\psi.$  It is easy to see that $\psi(p) = p-2$
(which is all we will need as we will be summing over squarefree numbers).
Then 
\begin{align*}
\sum_{\substack{0 < l_1 < l_2  \\
					l_1 \equiv l_2 \mod {(2,n_1,n_2))}}} \phi((n_1,n_2,l_1 - l_2))
& = 
\sum_{t \mid (n_1,n_2)} \psi(t) 
\sum_{\substack{0 < l_1 < l_2 \\
					l_1 \equiv l_2 \mod {t (2,n_1,n_2))}}} 1 \\
& = 
\sum_{t \mid (n_1,n_2)} \psi(t) 
\left(\frac{l_2}{t (2,n_1,n_2)} + O(1)\right) \\
& = 
\frac{l_2}{(2,n_1,n_2)}\sum_{t \mid (n_1,n_2)} \psi(t)/t + O(\phi((n_1,n_2))).
\end{align*}
From the product expansion of $\sum_{t|(n_1,n_2)} \psi(t)/t,$ we obtain
\begin{align*}
\sum_{\substack{0 < l_1 < l_2  \\
					l_1 \equiv l_2 \mod {(2,n_1,n_2))}}} \phi((n_1,n_2,l_1 - l_2))
&= \frac{l_2}{(2,n_1,n_2)}\prod_{p \mid (n_1,n_2)}2\cdot\frac{p-1}{p} + O(\phi((n_1,n_2)))\\
& = 
l_2 \frac{\tau((n_1,n_2))}{(2,n_1,n_2)}\cdot \frac{\phi((n_1,n_2))}{(n_1,n_2)}
+ O(\phi((n_1,n_2))).
\end{align*}
Also
\begin{eqnarray*}
\sum_{l_2 < x/M} l_2\bigg(\min\bigg\{2M,\frac{x}{l_2}\bigg\}-M\bigg) 
 = \frac{x^2}{4M} + O(x),
\end{eqnarray*}
and
\begin{eqnarray*}
\rho([n_1,n_2])\frac{d((n_1,n_2))}{(2,n_1,n_2)}  = \rho(n_1) \rho(n_2).
\end{eqnarray*}
Combining these relations, 
\begin{eqnarray*}
T^*(n_1,n_2; x,M) & = & \frac{\Cst x^2}{2M}
\frac{\rho(n_1)\rho(n_2)}{n_1 n_2} A(n_1 n_2) \\
&& + O\left(\frac{\rho(n_1 n_2)}{n_1 n_2} (n_1,n_2) x +
\left(\frac{x}{M}\right)^2(n_1 n_2 M)^{\pwr+\epsilon}\right).
\end{eqnarray*}
As the error between $T(n_1,n_2; x,M)$ and $T^*(n_1,n_2; x,M)$ is $\ll
x^{1+\epsilon},$  we have
\begin{align}
W(x; M,N) &=  
 \frac{\Cst x^2}{2M}
\sum_{n_1,n_2 < N} b_{n_1} b_{n_2} \frac{\rho(n_1)\rho(n_2)}{n_1 n_2} A(n_1 n_2)
\nonumber \\
& \qquad + \sum_{n_1,n_2<N}O\bigg( \frac{\rho(n_1 n_2)}{n_1 n_2} (n_1,n_2) x +
\left(\frac{x}{M}\right)^2 (n_1 n_2 M)^{\pwr+\epsilon}\bigg) +
O(x^{1+\epsilon})\nonumber \\
& = 
\frac{\Cst x^2}{2M}
\sum_{n_1,n_2 < N}b_{n_1} b_{n_2}\frac{\rho(n_1)\rho(n_2)}{n_1 n_2} A(n_1 n_2)
\nonumber \\
& \qquad + O\bigg(x^{1+\epsilon} +
\left(\frac{x}{M}\right)^2 N^2 (N^2 M)^{\pwr+\epsilon}\bigg).\label{eW} 
\end{align}
This estimate for $W(x; M,N)$ is sufficient.

Combining our three estimates for  $W,U,$ and $V$ in
\eqref{eW}, \eqref{eU},and \eqref{eV}, respectively, we see that
\begin{align*}
\mathcal{M}(x; M,N) &=
W(x; M,N) - 2x V(x; M,N) + x^2 U(M,N) \\
 &=
\frac{x^2 \Cst}{2M}\left(1 -2 + 1\right)
\sum_{n_1,n_2 < N} b_{n_1} b_{n_2} \frac{\rho(n_1)\rho(n_2)}{n_1 n_2} A(n_1 n_2)
\\ &\qquad + O(x + N^{15/4} M^{-9/4} x^2)x^{\epsilon}
\end{align*}
with the implied constant depending on $\epsilon.$  This completes the proof of Proposition \ref{proposition_1}.

\setlength{\parskip}{0.5cm}
\chapter{The Estimation of the Sifting Functions}
We will make use of the following proposition to find a lower bound for
$W(\mathscr{A},z)$ in the next section.
\begin{proposition}\label{est_sifting}
Let $y = x^{16/15},\ 0 < \gamma < 1/2,\ z = x^{\gamma},\ z \leq z_q <
x^{1/2}$ and $0\leq c_q \leq 1.$  Then for any $\epsilon > 0$
\begin{eqnarray*}
\sum_{\substack{q < x^{1-\epsilon} \\ (q,P(z_q)) = 1}} 
		c_q S(\mathscr{A}_q; z_q)
<
V(z)x\left\{
		\sum_{\substack{q < x^{1-\epsilon} \\ (q,P(z_q)) = 1}} 
		c_q \frac{\rho(q)}{q} F\left(\frac{\log (y/q)}{\log z_q}\right)
		\frac{\log z}{\log z_q} + O_{\gamma}(\epsilon)
		\right\}
\end{eqnarray*}
for $x>x_0(\epsilon,\gamma).$  
\end{proposition}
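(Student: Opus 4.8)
The plan is to apply the linear sieve of Lemma~\ref{lemma_2} separately to each sequence $\mathscr{A}_q$, sum the resulting upper bounds over the admissible $q$, and then use the fact that every prime divisor of such a $q$ is at least $z_q\ge z=x^{\gamma}$ to keep the accumulated error under control. For fixed $q$ with $(q,P(z_q))=1$ the sequence $\mathscr{A}_q$ is itself a sieve sequence: its density is $\rho(d)/d$ for $(d,q)=1$, its ``size'' is $X_q=\rho(q)x/q$, its remainder is $r(\mathscr{A}_q;d)=|\mathscr{A}_{qd}|-\tfrac{\rho(d)}{d}X_q=r(\mathscr{A};qd)$, and since every prime $p<z_q$ is coprime to $q$ the relevant sifting product is exactly $V(z_q)$ and the hypotheses of Chapter~\ref{ErrorTerm} hold with the same constant $K$. (We may assume $\epsilon$ is small in terms of $\gamma$; in the remaining range of $\epsilon$ the argument is a fortiori easier, all the errors below being $O_{\gamma}(1)=O_{\gamma}(\epsilon)$.)

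Next I would fix the sieve parameters uniformly: $N_q=x^{1/15-\epsilon}$, $M_q=\max\!\bigl(2,\,x^{1-4\epsilon}/q\bigr)$, and $\eta$ a small constant depending only on $\epsilon$. Then $M_qN_q<y/q$ and $qM_q<2x^{1-\epsilon}$, and writing $s_q=\log(M_qN_q)/\log z_q$ and $s_q^{*}=\log(y/q)/\log z_q$ one checks that $0<s_q\le s_q^{*}$, that both lie in a closed subinterval of $(0,\infty)$ depending only on $\gamma$ (the lower end coming from $z_q<x^{1/2}$, the upper from $z_q\ge x^{\gamma}$), and that $s_q^{*}-s_q\le 5\epsilon/\gamma$ (using $\log z_q\ge\gamma\log x$); since $F$ is decreasing and Lipschitz on that interval this yields $F(s_q)\le F(s_q^{*})+O_{\gamma}(\epsilon)$. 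Lemma~\ref{lemma_2} then gives, for $x$ large,
\[ S(\mathscr{A}_q,z_q)\ \le\ V(z_q)X_q\bigl(F(s_q)+E_q\bigr)+2^{\eta^{-7}}R(\mathscr{A}_q;M_q,N_q),\qquad E_q\ll_{\gamma}\epsilon . \]

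Summing over $q$ with the weights $c_q\in[0,1]$ splits the right-hand side into three pieces. For the main piece I would invoke the Mertens-type estimate $V(z_q)=V(z)\,\tfrac{\log z}{\log z_q}\,(1+o_x(1))$, valid uniformly for $z\le z_q<x^{1/2}$ (it follows from the sieve hypothesis together with $\sum_{z\le p<z_q}\chi_{\delta}(p)/p=o(1)$, $\chi_{\delta}$ being non-principal because $g$ is irreducible), to obtain $V(z_q)X_qF(s_q)\le V(z)x\,\tfrac{\rho(q)}{q}\tfrac{\log z}{\log z_q}F(s_q^{*})+O_{\gamma}(\epsilon)\,V(z)x\,\tfrac{\rho(q)}{q}\tfrac{\log z}{\log z_q}$. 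Summed, the leading terms are exactly the main term of the proposition (extending the range of $q$ back up to $x^{1-\epsilon}$ only enlarges the right side, as the summands are non-negative), while the $O_{\gamma}(\epsilon)$ piece together with $\sum_q c_qV(z_q)X_qE_q$ is
\[ \ll_{\gamma}\ \epsilon\,V(z)x\sum_{\substack{q<x^{1-\epsilon}\\(q,P(z_q))=1}}\frac{\rho(q)}{q}. \]
The crucial point is that each such $q$ is a product of fewer than $1/\gamma$ primes, all $\ge x^{\gamma}$, so $\sum_q\rho(q)/q\le\sum_{k<1/\gamma}\tfrac{1}{k!}\bigl(\sum_{z\le p<x}\rho(p)/p\bigr)^{k}\ll_{\gamma}1$, using $\sum_{z\le p<x}\rho(p)/p=\log(1/\gamma)+O(1)$; this last sum is $O_{\gamma}(1)$ rather than of size $\log x$, so the whole piece is $O_{\gamma}(\epsilon)\,V(z)x$, as required.

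Finally, for the remainder $2^{\eta^{-7}}\sum_q c_qR(\mathscr{A}_q;M_q,N_q)$ I would write $R(\mathscr{A}_q;M_q,N_q)=\sum_{m<M_q,\,n<N_q,\,mn\mid P(z_q)}a^{(q)}_m b^{(q)}_n\,r(\mathscr{A};qmn)$ and substitute $m'=qm$ (here $(q,m)=1$ since $m\mid P(z_q)$), turning the $q$-sum into $\sum_{m'<2x^{1-\epsilon}}\sum_{n<x^{1/15-\epsilon}}C_{m',n}\,r(\mathscr{A};m'n)$ with $C_{m',n}$ supported on squarefree $n$ coprime to $m'$ and $|C_{m',n}|\le\tau(m')\le x^{o(1)}$; this is the step for which $y=x^{16/15}=x^{(1-)+(1/15-)}$ is precisely the right height. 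Pulling out $\tau(m')$, the sum is $x^{o(1)}$ times one of the type controlled by Corollary~\ref{corollary1} (whose proof via Proposition~\ref{proposition_1} is unaffected by the coefficients $b_n$ also depending on $m'$), applied with $\epsilon/5$ in place of $\epsilon$ to cover the ranges at hand; thus $\sum_q c_qR(\mathscr{A}_q;M_q,N_q)\ll x^{1-\epsilon/6}$ and, after the constant $2^{\eta^{-7}}$, the remainder is $o(x/\log x)=o(V(z)x)$ and is absorbed. The two places I expect to need care are this merging of the $q$- and $m$-summations inside the remainder, and verifying that the Mertens estimate and the bound $\sum_q\rho(q)/q\ll_{\gamma}1$ are uniform enough that the per-$q$ errors $O_{\gamma}(\epsilon)$ really do add up to $O_{\gamma}(\epsilon)$ and not to $O_{\gamma}(\epsilon\log x)$ — the latter being exactly where the hypothesis $(q,P(z_q))=1$ does its essential work.
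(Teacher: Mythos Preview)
Your overall strategy matches the paper's, and your treatment of the main term --- including the explicit observation that $\sum_q \rho(q)/q \ll_\gamma 1$ because each admissible $q$ has fewer than $1/\gamma$ prime factors, all at least $x^{\gamma}$ --- is correct and is in fact used implicitly by the paper. The gap is in the remainder term.

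Because you apply the sieve at level $z_q$ with $M_q$ depending on $q$, the coefficients $a_m^{(q)}, b_n^{(q)}$ from Lemma~\ref{lemma_2} genuinely depend on $q$. After the substitution $m'=qm$ you are left, even after pulling out $\tau(m')$, with a sum $\sum_{m'} \bigl|\sum_n \widetilde b_n^{(m')} r(\mathscr{A}; m'n)\bigr|$ where $|\widetilde b_n^{(m')}|\le 1$ but $\widetilde b_n^{(m')}$ varies with $m'$. Your claim that the proof of Proposition~\ref{proposition_1} is ``unaffected'' by this is not right. That proof works by pulling the factor $b_{n_1}b_{n_2}$ \emph{outside} the $m$-sum in each of $W,V,U$ and then evaluating the inner $m$-sums via the equidistribution result (Proposition~\ref{lemma:4}); the cancellation $W-2xV+x^2U=O(\ldots)$ hinges on the identical main term $\tfrac{\Cst x^2}{2M}\sum_{n_1,n_2}b_{n_1}b_{n_2}\tfrac{\rho(n_1)\rho(n_2)}{n_1n_2}A(n_1n_2)$ appearing in all three. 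With $m$-dependent weights this interchange is blocked, and indeed the adversarial choice $b_n^{(m)}=\operatorname{sgn} r(\mathscr{A};mn)$ gives $\sum_m B^2\asymp MN^2x^{o(1)}$, which exceeds the bound of Proposition~\ref{proposition_1} once $M>x^{13/15}$ --- precisely the range where the nontrivial Cauchy--Schwarz argument is needed.

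The paper circumvents this by first grouping the $q$ into $O(\log^2 x)$ dyadic boxes $H(Q,Z)=\{q: Q\le q<2Q,\ Z\le z_q<2Z\}$ and, within each box, applying Lemma~\ref{lemma_2} with the \emph{common} parameters $M=x^{1-\epsilon}/Q$, $N=x^{1/15-\epsilon}$, sifting level $Z$ (legitimate since $S(\mathscr{A}_q,z_q)\le S(\mathscr{A}_q,Z)$), and a common $\nu$. Then $a_m,b_n$ are identical for every $q$ in the box. Since $m\mid P(Z)$ while every prime factor of $q$ is at least $Z$, the map $(q,m)\mapsto qm$ is injective, and the box's remainder is bounded by a single instance of Corollary~\ref{corollary1} with one fixed sequence $b_n$; summing over the $O(\log^2 x)$ boxes costs only a logarithmic factor. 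Replacing $Z$ by $z_q$ and $Q$ by $q$ in the main term afterwards introduces only an $O_\gamma(\epsilon)$ error, since $\log Z$ and $\log z_q$ differ by $O(1)$. This dyadic step is exactly the missing ingredient in your argument.
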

\begin{proof} For real numbers $Q,Z$, let $H(Q,Z)$ denote the set of integers
$q$ that satisfy the conditions
\begin{eqnarray*}
Q \leq 2Q,\ Z \leq z_q < 2Z,& (q,P(z_q)) = 1.
\end{eqnarray*}
Now for each $q$ we apply Lemma \ref{lemma_2} with $M = x^{1-\epsilon}/Q$ and 
$N = x^{1/15 - \epsilon}$.  It follows that for any $\nu > 0,$
\begin{eqnarray*}
S(\mathscr{A}_q,z_q) & \leq &
V(Z)\frac{\rho(q)}{q}x
\bigg\{F\bigg(\frac{\log MN}{Z}\bigg) + O_{\gamma}(\nu)\bigg\} \\
&& \ + 2^{\nu^{-10}} \sum_{m < M,\ m| P(Z)} \bigg| 
	\sum_{n<N,\ n,m)=1} b_n r(\mathscr{A}, qmn)\bigg|.
\end{eqnarray*}
Note that $r(\mathscr{B},d) = r(\mathscr{A}_q,d) = r(\mathscr{A},qd).$
Multiplying by $c_q$ and summing over $q$ we see that
\begin{align*}
\sum_{H(Q,Z)} c_q S(\mathscr{A}_q,z_q) 
 & <  \bigg( \sum_{q \in H(Q,Z)} c_q \frac{\rho(q)}{q}  \bigg) V(Z) x \bigg\{
	F\left(\frac{\log (y/Qx^{2\epsilon}}{\log Z}\right) 
	+O(\nu)\bigg\} + O_{\epsilon}(2^{\nu^{-10} x^{1-\epsilon}}),
\end{align*}
where the error term comes from the estimate in Corollary \ref{corollary1}.
But our assumption on the sifting density of $\rho$ we have 
\begin{eqnarray*}
V(Z) = V(z)\frac{\log z}{\log z_q}\left(1 + O\left(\frac{1}{\log
z}\right)\right),
\end{eqnarray*}
and 
\begin{eqnarray*}
F\left(\frac{\log(y/Qx^2\epsilon)}{\log Z}\right)  = 
F\left(\frac{\log (y/q)}{\log z_q}\right) + O_{\gamma}(\epsilon).
\end{eqnarray*}
Since the number of classes $H(Q,Z)$ needed to cover all possibilities of $q$
(namely $1 \leq q < x^{1-\epsilon}$) is $\ll \log^2 x,$ we obtain 
\begin{eqnarray*}
\sum_{\substack{q < x^{1-\epsilon} \\ (q,P(z_q)) = 1}} 
		c_q S(\mathscr{A}_q; z_q)
& < &
V(z)x\Big\{
		\sum_{\substack{q < x^{1-\epsilon} \\ (q,P(z_q)) = 1}} 
		c_q \frac{\rho(q)}{q} F\left(\frac{\log (y/q)}{\log z_q}\right)
		\frac{\log z}{\log z_q} + O_{\gamma}(\nu + \epsilon) \Big\}\\
&& + O_{\epsilon}\bigg(\log^2 x \cdot x^{1-\epsilon} 2^{\nu^{-10}}\bigg)
		\\
& \leq &
		\sum_{\substack{q < x^{1-\epsilon} \\ (q,P(z_q)) = 1}} 
		c_q \frac{\rho(q)}{q} F\left(\frac{\log (y/q)}{\log z_q}\right)
		\frac{\log z}{\log z_q} + O_{\gamma}(\epsilon).
\end{eqnarray*}
\end{proof}

\setlength{\parskip}{0.5cm}
\newcommand{\eless}{\stackrel{\scriptscriptstyle \infty}{<}}
\newcommand{\egreat}{\stackrel{\scriptscriptstyle \infty}{>}}
\chapter{Estimation of $W(\mathscr{A},z)$}
Recall from equation \eqref{eqn_W2}
\begin{align*}
W(\mathscr{A},z) 
&=
S(\mathscr{A},z) +
\frac{1}{3 - \lambda}\bigg\{\sum_{z \leq p < x^{1/2}} 
\sum_{z \leq p_1 < p} \frac{\log p/p_1}{\log x} S(\mathscr{A}_{p p_1},p_1) \\
&  -
\sum_{z \leq p < x^{1/2}}\bigg(\left(1-\frac{2\log p}{\log x}\right) 
	S(\mathscr{A}_p,p) + \frac{\log p}{\log x} S(\mathscr{A}_p,z)\bigg) \\
&  -
\sum_{x^{1/2}\leq p < x}\left(1 - \frac{\log p}{\log x} \right)
		S(\mathscr{A}_p,z)\bigg\}.
\end{align*}
We can use Proposition \ref{est_sifting} to estimate each of the sums except the
last.  For the last sum, we consider $\sum_{x^{1-\epsilon}\leq p < x}(1- \log p / \log x)
S(\mathscr{A}_p,z)$ and $\sum_{x^{1/2} \leq p < x^{1-\epsilon}}
(1 - \log p/\log x)S(\mathscr{A}_p,z)$ separately, applying Proposition
\ref{est_sifting} to the later.  For the former we crudely bound
\begin{align*}
\sum_{x^{1-\epsilon}\leq p < x}\left(1 - \frac{\log p}{\log x} \right)
		S(\mathscr{A}_p,z) &\ll 
\sum_{x^{1/2}\leq p < x}\left(1 + \frac{\log p}{\log x} \right)
		\frac{x}{p\log(x/p)} \\
&  \ll
\frac{x}{\log{x}}\sum_{x^{1/2}\leq p < x} (\frac{1}{p} + 
\frac{\log p}{p \log x})  \\
& \ll x/\log x.
\end{align*}

Upon application of Proposition \ref{est_sifting}, we will use partial
summation; and for such a task, we require to know more about the summatory
function $P(t) = \sum_{\leq p < t} \rho(p)/p.$  
\begin{lemma} \label{nagellemma}
We have
\begin{align*}
P(t) = \log \log t + b + o_G(1),
\end{align*}
for some constant $b.$
\end{lemma}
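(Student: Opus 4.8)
The plan is to establish an analogue of Mertens' theorem for the multiplicative function $\rho(p)$, exploiting the explicit description of $\rho$ on primes obtained earlier in the excerpt. Recall that $\rho(p) = 0$ for the finitely many primes $p \mid 2a\delta$, while $\rho(p) = \chi_\delta(p) + 1$ for all $p \nmid 2a\delta$. Hence, up to an $O_G(1)$ contribution from the bad primes,
\begin{align*}
P(t) = \sum_{p < t} \frac{\rho(p)}{p} = \sum_{\substack{p < t \\ p \nmid 2a\delta}} \frac{1}{p} + \sum_{\substack{p < t \\ p \nmid 2a\delta}} \frac{\chi_\delta(p)}{p} + O_G(1).
\end{align*}
The first sum is $\log\log t + M + O_G(1)$ by Mertens' theorem (with $M$ the Mertens constant, and the $O_G(1)$ absorbing the removal of primes dividing $2a\delta$). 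So everything reduces to showing that the character sum $\sum_{p < t} \chi_\delta(p)/p$ converges to a finite limit as $t \to \infty$.

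For that second piece I would invoke the standard fact that for a non-principal Dirichlet character $\chi$, the series $\sum_p \chi(p)/p$ converges; the partial sums therefore equal $L + o(1)$ for a constant $L$ depending only on $\chi_\delta$ (equivalently on $G$, via $\delta$). The key input here is that $\chi_\delta$ is a non-principal character modulo $4\delta$ — this is where irreducibility of $G$ matters, since $\delta = b^2 - 4ac$ is not a perfect square, so $\chi_\delta = \jacobi{\delta}{\cdot}$ is a genuine (non-principal) quadratic character and $L(1,\chi_\delta) \neq 0$. One derives the convergence of $\sum_p \chi_\delta(p)/p$ from $\log L(s,\chi_\delta) = \sum_p \chi_\delta(p) p^{-s} + O(1)$ for $\mathrm{Re}\, s > 1$, the nonvanishing and analyticity of $L(s,\chi_\delta)$ at $s = 1$, and an Abelian/Tauberian passage to the limit $s \to 1^+$ together with a partial-summation argument controlling $\sum_{p < t}\chi_\delta(p)/p - \sum_p \chi_\delta(p)/p$.

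Assembling the pieces, I would set $b := M + L + (\text{correction from primes } p \mid 2a\delta)$, so that
\begin{align*}
P(t) = \log\log t + b + o_G(1),
\end{align*}
which is exactly the claim. The main obstacle — really the only non-bookkeeping step — is justifying the convergence of $\sum_p \chi_\delta(p)/p$ and the error rate in the approach to its limit; but this is entirely classical once one knows $\chi_\delta$ is non-principal, which follows from the irreducibility hypothesis on $G$. Everything else is Mertens' theorem plus tracking the finitely many ramified primes, all of which contribute only to the constant $b$ and to the $O_G(1)$ that gets absorbed.
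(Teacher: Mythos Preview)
Your argument is correct but takes a genuinely different route from the paper's. The paper does not decompose $\rho(p) = 1 + \chi_\delta(p)$; instead it quotes a result of Nagel giving directly
\[
\sum_{p<t} \frac{\rho(p)\log p}{p} = \log t + O_G(1),
\]
and then runs a single partial-summation argument (writing $P(t) = \int (\log x)^{-1}\,dL(x)$) to pass from this log-weighted sum to $P(t)$. Your approach instead exploits the explicit quadratic structure of $\rho$: Mertens handles the principal part and the non-principality of $\chi_\delta$ (equivalently, irreducibility of $G$) handles the character part. This is more transparent and self-contained for the specific polynomial at hand, while the paper's route is more portable --- it works verbatim for any $\rho$ with linear sieve density once the Nagel-type estimate is available. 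One small caveat: your sketch of why $\sum_p \chi_\delta(p)/p$ converges is slightly loose, since an Abelian limit $s\to 1^+$ alone does not give convergence of the series at $s=1$; the clean way is to first show $\sum_{p<t}\chi_\delta(p)\log p/p = O(1)$ and then apply partial summation --- which is exactly the Nagel estimate restricted to the character piece, so the two proofs are closer in substance than they first appear.
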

\begin{proof}
From
a result of Nagel in \cite{Nagel}, we have
\begin{align}
L(t) = \sum_{p < t} \frac{\rho(p) \log p}{p} = \log t + O_G(1) \qquad (t\geq 2).
\label{eqn_L}
\end{align}
Let $R(t) = L(t) - \log t = O_G(1).$  Then,
\begin{align*}
P(t) & =
\int_{2^{-}}^{t} \frac{1}{\log x} \ dL(x)  
= \frac{L(t)}{ \log t} + \int_{2}^{t} \frac{L(x)}{x \log^2 x} \ dx\\
&=  \bigg\{1+ \frac{R(t)}{ \log t}\bigg\} + \bigg\{\int_{2}^{t} \frac{1}{x \log x} \
dx+\int_{2}^{t} \frac{R(x)}{x \log^2 x} \ dx \bigg\}\\
&=  \log \log t + \bigg\{1 - \log \log 2\ + \int_{2}^{\infty}\frac{R(x)}{x \log^2 x}
\ dx \bigg\}  + \bigg\{\frac{R(t)}{ \log t} - \int_{t}^{\infty} \frac{R(x)}{x \log x} \ dx\bigg\}\\
&= \log \log t + b + o_G(1),
\end{align*}
where $b$ is the constant $1 -\log\log 2  + \int_2^{\infty} R(x)/x \log^2 x \
dx.$ 
\end{proof}
Choose $\gamma=1/5.$  For the remainder of this section we will use the notation
$f(x) \eless g(x)$ to mean $f(x)$ is `eventually less than' $g(x)$ (ie $f(x)<g(x)$ for
$x>x_0(G,\epsilon, \gamma)$). Similarily the notation $f(x) \egreat g(x)$ will
mean $f(x) > g(x)$ for $x>x_0(G,\epsilon,\gamma).$  

From Proposition \ref{est_sifting}, it follows that
\begin{align}
\sum_{z \leq p < x^{1/2}}\left(1-2\frac{\log p}{\log x}\right) 
	S(\mathscr{A}_p,p)
& \eless V(z) x \bigg\{\sum_{z \leq p < x^{1/2}}\left(1-2\frac{\log p}{\log x}\right) 
		\frac{\rho(p)}{p} \nonumber \\ &\qquad \cdot F\bigg(\frac{\log y/p}{\log p}\bigg) \frac{\log z}{\log
		p}+O_{\gamma}(\epsilon)\bigg\}.\label{eqn200}
\end{align}
Upon application of partial summation, the sum becomes
\begin{align} \label{eqn100}
\int_{z}^{x^{1/2}}
	\left(1-2\frac{\log v}{\log x}\right) 
 	F\bigg(\frac{\log y/v}{\log v}\bigg) \frac{\log z}{\log v} \ dP(v)
= 
\int_{\gamma}^{1/2}
	(1-2u)
 	F\bigg(\frac{\alpha - u}{u}\bigg) \frac{\gamma}{u} \ dP(x^u).
\end{align}
To evaulate this integral, we make use of the lemma:
\begin{lemma} \label{intgr}
Suppose $A(t)$ is a differientiable function with bounded derivative on
$[\alpha,\beta],$ and $B(t) = b(t) + o(1),$ where $o(1)\rightarrow 0$ as some
parameter $x\rightarrow\infty.$  Then
\begin{align*}
\int_{\alpha}^{\beta} A(t) dB(t) =
\int_{\alpha}^{\beta} A(t) db(t) + o(1).
\end{align*}
\end{lemma}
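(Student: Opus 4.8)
The plan is to reduce everything to a single integration by parts applied to the error function $R(t) := B(t) - b(t)$. By hypothesis $\sup_{t\in[\alpha,\beta]}|R(t)| = o(1)$ as the parameter $x\to\infty$. (In every place the lemma is invoked in this exposition, $B(t)$ is, up to a smooth substitution, a partial-sum function such as $P(x^t)$, which is monotone and hence of bounded variation on the compact interval $[\alpha,\beta]$; since $b$ is smooth, $R$ is then of bounded variation and all the Riemann--Stieltjes integrals below exist. Equivalently, one could simply add ``$B$ of bounded variation on $[\alpha,\beta]$'' to the hypotheses.) Since $A$ is differentiable with bounded derivative on $[\alpha,\beta]$, it is in particular continuous and bounded there; write $|A(t)|\le \|A\|_\infty$ and $|A'(t)|\le \|A'\|_\infty$ on $[\alpha,\beta]$.

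First I would use linearity of the Stieltjes integral to split
\begin{align*}
\int_\alpha^\beta A(t)\,dB(t) = \int_\alpha^\beta A(t)\,db(t) + \int_\alpha^\beta A(t)\,dR(t),
\end{align*}
so it suffices to show the last integral is $o(1)$. Next I would integrate by parts — legitimate since $A$ is continuous and $R$ is of bounded variation — to get
\begin{align*}
\int_\alpha^\beta A(t)\,dR(t) = A(\beta)R(\beta) - A(\alpha)R(\alpha) - \int_\alpha^\beta R(t)\,dA(t),
\end{align*}
and, as $A$ has a bounded (hence integrable) derivative, replace $dA(t)$ by $A'(t)\,dt$.

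Finally I would estimate crudely: each boundary term is $\le \|A\|_\infty \sup_{[\alpha,\beta]}|R| = o(1)$, while
\begin{align*}
\left|\int_\alpha^\beta R(t)A'(t)\,dt\right| \le (\beta-\alpha)\,\|A'\|_\infty \sup_{[\alpha,\beta]}|R| = o(1).
\end{align*}
Adding these yields $\int_\alpha^\beta A(t)\,dR(t) = o(1)$, which is the assertion. There is no genuine analytic obstacle: the only point requiring care is the existence of $\int A\,dB$ and the validity of the integration by parts, which is why I flag the bounded-variation property of $B$ at the outset; the substance of the lemma is merely that a uniformly small perturbation of the integrator changes a Stieltjes integral against a fixed smooth integrand by a uniformly small amount.
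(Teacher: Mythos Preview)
Your argument is correct and essentially identical to the paper's: split off $R(t)=B(t)-b(t)$, integrate $\int A\,dR$ by parts, and bound the boundary terms and $\int R A'\,dt$ using $\|A\|_\infty,\|A'\|_\infty<\infty$ and $\sup|R|=o(1)$. Your added remark about bounded variation of $B$ to justify existence of the Stieltjes integrals is a point the paper simply glosses over.
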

\begin{proof}
We have
\begin{align*}
\int_{\alpha}^{\beta} A(t)\ dB(t) & = 
\int_{\alpha}^{\beta} A(t)\ db(t) + \int_{\alpha}^{\beta} A(t)\ d\{B(t) - b(t)\}
\end{align*}
By partial summation the second integral is
\begin{align}
\int_{\alpha}^{\beta} A(t) d\{B(t) - b(t)\} & =
\{B(t)-b(t)\}A(t) \big|_{\alpha}^{\beta} - \int_{\alpha}^{\beta}  (B(t)-b(t))
A'(t) \ dt \label{eqn101}.
\end{align}
Since $A'(t)$ is bounded on $[\alpha,\beta]$, so is $A(t);$ thus \eqref{eqn101}
is $o(1).$ 
\end{proof}
As $P(x^u) = \log \log x^u + b + o_G(1),$ Lemma \ref{intgr} gives
\eqref{eqn100} is 
\begin{align*}
  \int_{\gamma}^{1/2}
	(1-2u)
 	F\bigg(\frac{\alpha - u}{u}\bigg) \frac{\gamma}{u} \ \frac{du}{u} + o_G(1),
\end{align*}
and so \eqref{eqn200}  becomes
\begin{align*}
\sum_{z \leq p < x^{1/2}}\left(1-2\frac{\log p}{\log x}\right) 
	S(\mathscr{A}_p,p)
& \eless V(z) x \bigg\{
  \int_{\gamma}^{1/2}
	(1-2u)
 	F\bigg(\frac{\alpha - u}{u}\bigg) \frac{\gamma}{u} \ \frac{du}{u}
	+O_{\gamma}(\epsilon)\bigg\}.
\end{align*}
Similarily,
\begin{align*}
\sum_{z \leq p < x^{1/2}} \frac{\log p}{\log x} S(\mathscr{A}_p,z)
& \eless 
V(z) x \bigg\{\sum_{z \leq p < x^{1/2}} \frac{\log p}{\log x} \frac{\rho(p)}{p}
F\bigg(\frac{\log y/p}{\log z}\bigg)+O_{\gamma}(\epsilon)\bigg\}. \\
& = V(z) x \bigg\{\int_{\gamma}^{1/2} u F\bigg(\frac{\alpha - u}{\gamma}\bigg) \frac{du}{u} + O_{\gamma}(\epsilon)\bigg\},
\end{align*}
and 
\begin{align*}
\sum_{x^{1/2}\leq p < x}\left(1 - \frac{\log p}{\log x}\right) 
		S(\mathscr{A}_p,z)
& \eless
V(z) x \bigg\{\sum_{z \leq p < x^{1/2}} \bigg(1 - \frac{\log p}{\log x}\bigg)
\frac{\rho(p)}{p} F\bigg(\frac{\log y/p}{\log z}\bigg) + O_{\gamma}(\epsilon)\bigg\}.\\
& = V(z) x \bigg\{\int_{\gamma}^{1/2} (1-u) F\bigg(\frac{\alpha -
u}{\gamma}\bigg)
\frac{du}{u}+ O_{\gamma}(\epsilon)\bigg\}.
\end{align*}
And lastly the double sum is 
\begin{align*}
\sum_{z \leq p < x^{1/2}} 
\sum_{z \leq p_1 < p} \frac{\log p/p_1}{\log x} S(\mathscr{A}_{p p_1},p_1) 
& \egreat
			V(z)x\bigg\{\int_{\gamma}^{1/2} \int_{\gamma}^{t} (u-t) \frac{\gamma}{t}
					f\bigg(\frac{\alpha - u - t}{t}\bigg) 
					\frac{du}{u} \frac{dt}{t}+ O_{\gamma}(\epsilon)\bigg\}.
\end{align*}
And since $\frac{1}{3 - \lambda} \rightarrow 1$ as $x\rightarrow \infty,$ it
follows that 
\begin{align*}
W(\mathscr{A},z) &\egreat V(z)x \bigg\{f\bigg(\frac{\alpha}{\gamma}\bigg)
			+ \int_{\gamma}^{1/2} \int_{\gamma}^{t} (u-t) \frac{\gamma}{t}
					f\bigg(\frac{\alpha - u - t}{t}\bigg) 
					\frac{du}{u} \frac{dt}{t} \\
	& \qquad - \int_{\gamma}^{1/2}\bigg[(1-2u)\frac{\gamma}{u} 
			F\bigg(\frac{\alpha - u}{u}\bigg) + uF\bigg(\frac{\alpha-u}{\gamma}\bigg)
			\bigg] \frac{du}{u}\\
	& \qquad - \int_{1/2}^1(1-u)F\bigg(\frac{\alpha - u}{\gamma}\bigg)\frac{du}{u} 
	-\epsilon\}\\
	&= V(z) x \{W - \epsilon\},	
\end{align*}
with $W$ being implicitly defined, $\alpha=16/15,$ and $\epsilon > 0.$ 
From Mertens prime number theorem, it follows that
\begin{align*}
V(z) \sim \Gamma_G e^{-C} (\log z)^{-1} = \Gamma_G e^{-C}(\gamma \log x)^{-1}.
\end{align*}
The functions $F(s)$ and $f(s)$ are elementary in the intervals 
$0<s\leq 3$ and $0 < s \leq 4$ respectively.  But as we will choose
$\gamma=1/5$, we require $F(s)$ and $f(s)$ outside these ranges.  
From the differential-difference equations for $F(s)$ and $f(s)$ in Section
\ref{ErrorTerm}, it follows that
\begin{align*}
s F(s) &=2e^{C} \bigg\{1+ \int_{2}^{s-1} \log(u-1) \frac{du}{u}\bigg\} 
\qquad \textrm{if } 3 \leq s \leq 5,\\
sf(s) &= 2 e^{C}\bigg\{ \log (s-1) + \int_{3}^{s-1} \int_{2}^{t-1} \log(u-1) 
\frac{du}{u}\frac{dt}{t} \bigg\} \qquad \textrm{if }4 \leq s \leq 6.
\end{align*}
Using these formulae, we obtain
\begin{align*}
W &= 2e^C\gamma\bigg\{ \log (\alpha - \gamma) - \frac{\alpha - 1 }{\alpha} \log
(\alpha -1) \\
&\qquad - \int_{2}^{\alpha/\gamma - 2} \bigg[ 
t \log \frac{\alpha(t+1)}{(\alpha - \gamma)(t+2)} + 
\log \big(1 - \frac{\gamma t}{\alpha - \gamma} \big)(t+1)\bigg] 
\frac{\log(t-1)}{t(t+1)} \ dt\bigg\}.
\end{align*}
With the help of Maple, we see that
\begin{align*}
W > 2e^C\gamma \cdot (.014057...) > 2e^C\gamma /154,
\end{align*}
the latter obtained by Iwaniec by considering an integral more suitable for
manual calculations. It follows that for sufficiently large $x$
\begin{align*}
W(\mathscr{A},z) > \frac{\Gamma_G}{77} \frac{x}{\log x}
\end{align*}
as required.

 \bibliographystyle{plain}
 \bibliography{maththesis}

\begin{thebibliography}{1}

\bibitem{Hooley}
Christopher Hooley.
\newblock On the greatest prime factor of a quadratic polynomial.
\newblock {\em Acta Math.}, 117:281--299, 1967.

\bibitem{almostprimes}
Henryk Iwaniec.
\newblock Almost-primes represented by quadratic polynomials.
\newblock {\em Invent. Math.}, 47(2):171--188, 1978.

\bibitem{newform}
Henryk Iwaniec.
\newblock A new form of the error term in the linear sieve.
\newblock {\em Acta Arith.}, 37:307--320, 1980.

\bibitem{Nagel}
T.~Nagel.
\newblock G\'{e}n\'{e}ralisation d'un th\'{e}or\'{e}me de {T}chebycheff.
\newblock {\em J. Math. Pures Appl.}, 4:343--356, 1921.

\bibitem{Fourier}
Roald~M. Trigub and Eduard~S. Bellinsky.
\newblock {\em Fourier analysis and approximation of functions}.
\newblock Kluwer Academic Publishers, Dordrecht, 2004.

\end{thebibliography}

\end{document}